\def\N{\mathbb{N}}
\newcommand{\calB}{\mathcal{B}}
\newcommand{\calL}{\mathcal{L}}
\newcommand{\I}{\mathcal{I}}
\newcommand{\bfL}{\mathbf{L}}
\newcommand{\bfH}{\mathbf{H}}
\newcommand{\lb}{[\hspace{-0.02in}[}
\newcommand{\rb}{]\hspace{-0.02in}]}
\newcommand{\vk}{{\bf K}}
\newcommand{\vf}{{\bf f}}
\newcommand{\vn}{{\bf n}}
\newcommand{\vu}{{\bf u}}
\newcommand{\vv}{{\bf v}}
\newcommand{\vs}{{\bf s}}
\newcommand{\calE}{{\mathcal  E}}
\newcommand{\calF}{{\mathcal  F}}
\newcommand{\bchi}{\boldsymbol \chi}
\newcommand{\bvphi}{\boldsymbol \varphi}
\newcommand{\balpha}{\boldsymbol \alpha}
\newcommand{\bbeta}{\boldsymbol \beta}
\newcommand{\vV}{{\bf V}}
\newcommand{\vw}{{\bf w}}
\newcommand{\bH}{{\boldsymbol{H}}}
\newcommand{\bsx}{{\boldsymbol{x}}}
\newcommand{\bsy}{{\boldsymbol{y}}}
\newcommand{\bsz}{{\boldsymbol{z}}}
\newcommand{\bsr}{{\boldsymbol{r}}}
\newcommand{\vsigma}{\boldsymbol{\sigma}}
\newcommand{\R}{\mathbb{R}}
\newcommand{\calS}{\mathcal{S}}
\numberwithin{equation}{section}
\newcommand{\veps}{{\boldsymbol{\varepsilon}}}
\newcommand{\iprod}[1]{\langle#1 \rangle}
\newtheorem{theorem}{Theorem}[section]
\newtheorem{remark}{Remark}[section]
\newenvironment{proof}{\vspace{0.1cm}\hfill\\%
\noindent\textbf{Proof}}{$\Box$
\vspace{0.2cm}\\}
\title{High-order QMC nonconforming FEMs for nearly incompressible planar stochastic elasticity equations\thanks{School of Mathematics and Statistics, University of New South Wales, Sydney, Australia}\thanks{This work was supported by the Australian Research Council
grant DP220101811.}}
\author{J. Dick, T. Le Gia, W. McLean, K. Mustapha, T. Tran}
\begin{document}
\maketitle
\begin{abstract}
In a recent work (Dick et al, QMC sparse grid Galerkin finite element methods for linear elasticity equations with uncertainties, {\tt{https:arxiv.org/pdf/2310.06187.pdf}}), we considered a linear stochastic elasticity equation with random Lam\'e parameters which are parameterized by a countably infinite number of terms in separate expansions. We  estimated the expected values over the infinite dimensional parametric space of linear functionals ${\mathcal L}$ acting on the continuous solution $\vu$ of the elasticity equation. This was achieved by truncating the expansions of the random parameters, then using a high-order quasi-Monte Carlo (QMC) method to approximate the high dimensional integral combined with the conforming Galerkin finite element method (FEM) to approximate the displacement over the physical domain $\Omega.$ In this work, as a further development of aforementioned article, we focus on the case of a nearly incompressible linear stochastic elasticity equation. To serve this purpose,  in the presence of stochastic inhomogeneous (variable Lam\'e parameters) nearly compressible material,   we develop a new  {\em locking-free} {\em symmetric} nonconforming Galerkin FEM that handles the inhomogeneity. In the case of nearly incompressible material, one known important advantage of nonconforming approximations is that they yield optimal order convergence rates that are uniform in the Poisson coefficient. 
Proving the convergence of the nonconforming FEM leads to another challenge that is summed up in showing the needed regularity properties of $\vu$.  For the error estimates from the high-order QMC method, which is needed to estimate the expected value over the infinite dimensional parametric space of ${\mathcal L}\vu,$ we 
are required here to show certain regularity properties of $\vu$ with respect to  the random coefficients.
Some numerical results are delivered at the end.   
\end{abstract}
\section{Introduction}
In this work we explore and analyze the application of  the quasi-Monte Carlo (QMC)  method combined with a new nonconforming  Galerkin finite element  method (FEM)  to solve a linear elastic model with uncertainties \cite{Eigel2014,HoangNguyenXia2016,Khan2019,Khan2021,XiaHoang2014}. More specifically,  we consider the case  where the properties of the elastic inhomogeneous material (that is, the Lam\'e parameters) are varying spatially in an uncertain way (see \eqref{KLexpansion}). The equation governing small elastic deformations of a body $\Omega$ in~$\R^2$ with polygonal boundary can be written as
\begin{equation}
 -\nabla \cdot \vsigma(\bsr;\vu(\bsx,\bsr)) = \vf(\bsx) \quad \text{for } \bsx \in \Omega,~~\bsr=(\bsy,\bsz), \label{eq:L1}
\end{equation}
subject to homogeneous Dirichlet boundary conditions $\vu(\cdot,\bsr) = {\bf 0}$ on   $\Gamma:=\partial \Omega$ with $\bsy$ and $\bsz$ being parameter vectors describing randomness. The parametric Cauchy stress tensor $\vsigma(\bsr;\cdot) \in [L^2(\Omega)]^{2\times 2}$ is defined as 
\[\vsigma(\bsr;\vu(\cdot,\bsr)) = \lambda(\cdot,\bsz)\Big(\nabla\cdot \vu(\cdot,\bsr)\Big) I 
+ 2\mu(\cdot,\bsy) \veps(\vu(\cdot,\bsr))\quad{\rm on}\quad \Omega \] 
with  $\vu(\cdot, \bsr)$ being the displacement vector field and the symmetric strain tensor
 $ \veps(\vu) := \frac{1}{2} (\nabla \vu + (\nabla \vu)^T).$
Here, $\vf$ is the body force per unit volume, and $I$ is the identity tensor.   The gradient ($\nabla$) and the divergence ($\nabla \cdot$) are understood to be with respect to the physical variable $\bsx \in \Omega$.  The random parameters $\mu$ and $\lambda$ are expressed in the following  separate expansions: 
\begin{equation}\label{KLexpansion}
 \mu(\bsx,\bsy) = \mu_0(\bsx) +\sum_{j=1}^\infty y_j \psi_j(\bsx)~~{\rm and}~~ 
\lambda(\bsx,\bsz) =\Lambda  \widehat \lambda(\bsx,\bsz) = \Lambda\Big(\widehat \lambda_0(\bsx) +\sum_{j=1}^\infty z_j \phi_j(\bsx)\Big),
\end{equation}
where $\bsy= (y_j)_{j\ge1}$ and $\bsz=(z_k)_{k\ge1}$ belong to $U:=(-\frac{1}{2},\frac{1}{2})^\N$ consisting of a countable number of parameters $y_j$ and $z_k$, respectively,  which are assumed to be i.i.d. uniformly distributed. 
Here, the constant~$\Lambda$ satisfies $1\le\Lambda<\infty$, and $\{\psi_j\}$ and $\{\phi_j\}$ are orthogonal basis functions for $L^2(\Omega)$. For the well-posedness of the  elastic problem \eqref{eq:L1}, we assume that
$\mu_{\min} \le \mu \le \mu_{\max}$ and $\widehat \lambda_{\min}\le \widehat \lambda\le \widehat \lambda_{\max}$  on $\Omega \times U,$  for some positive constants $\mu_{\min}$, $\mu_{\max}$, $\widehat \lambda_{\min},$  and $\widehat \lambda_{\max}.$

To ensure that  $\mu$ and $\widehat \lambda$ are well-defined for all
parameters $\bsy$, $\bsz \in U$, we impose  that
\begin{equation}\label{ass A1}
 \mu_0,\,\widehat \lambda_0  \in L^\infty(\Omega), \quad \sum_{j=1}^\infty \|\psi_j\|_{L^\infty(\Omega)} < \infty~~{\rm and}~~
 \sum_{j=1}^\infty \|\phi_j\|_{L^\infty(\Omega)} < \infty\,.
\tag{A1}
\end{equation}

As in \cite{DickEtAl2022}, in this work, we are interested in efficient  approximation of the expected value 
\begin{equation}\label{F} 
\I(\calL(\vu)):=\int_{U}\int_{U}\calL\big( \vu(\cdot,\bsr)\big)\,d\bsr
\end{equation}
of $\calL(\vu(\cdot,\bsr))$ for a certain linear functional $\calL:{\bf L}^2(\Omega)  \to \R$, with respect to the random vector variables $\bsr=(\bsy,\bsz),$ $d\bsr=d\bsy\,d\bsz$,  $d\bsy$ and $d\bsz$ are the uniform probability measures on $U$, and  $\vu(\cdot,\bsr)$ (displacement) is the random vector function that solves problem  \eqref{eq:L1}. 
The main focus is the elastic material that is permitted to be nearly incompressible, that is,  the Poisson ratio of the elastic material $\nu$ approaches $1/2$, which is not the case in \cite{DickEtAl2022}. So, in the current analysis, we seek estimates that are independent of the (possibly very large) constant factor $\Lambda$ in \eqref{KLexpansion}.
Note that, as a special case,  if $\lambda$ is a constant multiple of $\mu$, that is, the randomnesses  of the Lam\'e parameters are due to the ones in the Young modulus $E$, then  $\nu=\frac{\lambda}{2(\lambda+\mu)}$ is constant, see \cite{Khan2019,Khan2021}.


To estimate $\I(\calL(\vu))$,  we have to deal with three sources of errors: a dimension truncation error from truncating the  infinite expansions in \eqref{KLexpansion}, a QMC quadrature error from approximating the high-dimensional integral, and a nonconforming Galerkin  error  from approximating the continuous solution over the physical domain $\Omega$. We explore these steps and state the main results of this work (Theorem \ref{sec: main results}) in the next section.  Prior to this, we discuss the estimation of the  expected value  of a given high dimensional function via a two different high-order QMC rules. The first one is based on a tensor product structure of  two separate high-order rules for the random coefficients arising from the expansions for $\lambda$ and  $\mu$, respectively, see Theorem~\ref{prop:qmc}. The second high-order QMC rule is based on  simulating both $\lambda$ and $\mu$ simultaneously using one family of QMC points. Under some additional regularity assumptions, the latter QMC rule leads to a better rate of convergence, see Theorem \ref{prop:qmc6.3}. We refer to  \cite{DickEtAl2022} for  the proofs of Theorems \ref{prop:qmc} and \ref{prop:qmc6.3}.  

 The achieved regularity properties in  \cite{DickEtAl2022} are not applicable here due to the $\Lambda$ dependence. So, in Section \ref{VFR}, we  show in detail the required regularity properties of the continuous solution $\vu$  (that are $\Lambda$ independent) with respect to both the random parameters $\bsy$ and $\bsz$, and the spatial variable $\bsx.$ These results are needed to guarantee the convergence of the errors from both the QMC integration and the nonconforming finite element discretization. For the convergence of the latter, we require the parameters $\mu(\cdot,\bsy),\,\widehat \lambda(\cdot,\bsz)\in W^{1,\infty}(\Omega)$ for every $\bsy,\,\bsz  \in U$. To have this, we additionally assume that
\begin{equation}\label{ass A4}
\mu_0,\,\widehat \lambda_0  \in W^{1,\infty}(\Omega),~~ \sum_{j=1}^\infty \|\nabla \psi_j\|_{L^\infty(\Omega)} < \infty~~ {\rm and}~~ \sum_{j=1}^\infty \|\nabla \phi_j\|_{L^\infty(\Omega)} < \infty.
\tag{A2}
\end{equation}

The error from truncating the infinite series expansion  in \eqref{KLexpansion} is investigated in Section \ref{Sec: Truncation} (Theorem \ref{Truncating error}). More precisely, if  $\vu_{\vs}$, $\vs:=(s_1,s_2) \in \N^2$, is the solution of \eqref{eq:L1} obtained by truncating the infinite expansions in \eqref{KLexpansion} at $j=s_1$ and $k=s_2$, then the task is to estimate the error between $\vu$ and $\vu_{\vs}$. In other words, 
$\vu_{\vs}$ is the solution of \eqref{eq:L1}  corresponding to the truncated (finite) Lam\'e parameters $\mu_{s_1}(\bsx,\bsy^{s_1}) = \mu(\bsx,\bsy^{s_1})$ and $\lambda_{s_2}(\bsx,\bsz^{s_2}) =\lambda(\bsx,\bsz^{s_2}),$ where    $\bsy^{s_1}=(y_1,y_2,\cdots,y_{s_1},0,0,\cdots)$ and  $\bsz^{s_2}=(z_1,z_2,\cdots,z_{s_2},0,0,\cdots)$.
To  minimize  the errors  (see Theorem \ref{Truncating error}), we  assume that the functions  $\psi_j$ and $\phi_j$  are ordered
so that $\|\psi_j\|_{L^\infty(\Omega)}$ and $\|\phi_j\|_{L^\infty(\Omega)}$ are  nonincreasing, that is, 
\begin{equation}\label{ass A5}
\| \psi_j \|_{L^\infty(\Omega)} \ge
\| \psi_{j+1} \|_{L^\infty(\Omega)} ~~{\rm and}~~ \| \phi_j \|_{L^\infty(\Omega)} \ge
\| \phi_{j+1} \|_{L^\infty(\Omega)},\quad{\rm for}~~j\ge 1.
\tag{A3}
\end{equation}
We may also require that (this assumption is needed to show a high-order QMC error as well)
\begin{equation}\label{ass A3}
 \sum_{j=1}^\infty \| \psi_j \|^p_{L^\infty(\Omega)} < \infty
 ~~{\rm and}~~  \sum_{j=1}^\infty \| \phi_j \|^q_{L^\infty(\Omega)} < \infty,\quad{\rm for~some}~~0<p,\,q<1\,.
\tag{A4}
\end{equation}

To avoid the locking and to deal with the variability of the Lam\'e parameters, we develop a new symmetric low-order (piecewise linear)  nonconforming Galerkin FEM for approximating the parametric solution $\vu(\cdot,\bsr)$ of \eqref{eq:L1} over the physical domain $\Omega$  in Section~\ref{Sec: FEM},  and prove error estimates.    It is known that conforming FEMs (studied in  \cite{DickEtAl2022})  suffer from so called locking (or non-robustness) \cite{AinsworthParker2022} in the nearly incompressible case.  The locking phenomenon was avoided by using different approaches; these include   nonconforming and   mixed Galerkin FEMs  \cite{BottiDi PietroGuglielmana2019,Braess2007, BrennerSung1992,BrennerScott2008, BrezziFortin1991,Falk1991} for the case of constant Lam\'e parameters. The nonconforming FEM in the existing literature is not applicable for the case of variable   Lam\'e parameters. Section \ref{Sec: Numeric} is devoted to numerical simulations. In four different examples, we numerically illustrate the achieved theoretical convergence results.

\begin{remark}
Apart from the regularity results in Theorem \ref{lem: vu bound}, the achieved  results can be extended to  the case of homogeneous mixed boundary conditions of the form $\vu(\cdot,\bsr)=0$ on $\Gamma_D$ and $\vn\cdot\vsigma(\bsr;\vu(\cdot,\bsr))=0$ on $\Gamma_N$, where $\Gamma_D$ and $\Gamma_N$ are disjoint, with  $\Gamma_D \cup \Gamma_N=\Gamma$ and  measure of $\Gamma_D>0$, for every $\bsr \in U\times U$. Here,  $\vn$ is the outward unit normal column vector to $\Gamma_N.$
\end{remark}

\section{Estimations of the expected value}\label{Sec: main results}
To estimate the expected value $\I(\calL(\vu))$ in \eqref{F}, we initially approximate  $\vu$ by $\vu_\vs$ ($\vs:=(s_1,s_2) \in \N^2$) which is the solution of \eqref{eq:L1} corresponding to the truncated $\mu_{s_1}$ and $\lambda_{s_2}$. Then, with    $U_i=[0,1]^{s_i}$ being of fixed dimensions $s_i$ for $i=1,2,$  we intend to approximate $\I(\calL(\vu))$ by  
\begin{equation}\label{F finite}
 \I_\vs(\calL(\vu_{\vs})):=\int_{U_2}\int_{U_1}F(\bsr)d\bsr,\quad{\rm where}~~F(\bsr):=\calL\Big(\bf \vu_\vs\big(\cdot,\bsr-{\bf \frac{1}{2}}\big)\Big)\,.
\end{equation}
The shifting of the coordinates by ${\bf \frac{1}{2}}$ translates $U_i$ to  $\big[-\frac{1}{2},\frac{1}{2}\big]^{s_i}$ for $i=1,2$. The idea behind using this shift is due to the second step where  we  approximate the high dimensional integrals in \eqref{F finite} using \emph{deterministic, interlaced high-order polynomial lattice QMC rules}  of the form  
\begin{equation}\label{eq:QMCInt F}
\I_\vs(\calL(\vu_\vs)) \approx \I_{\vs,{\bf N}}(\calL(\vu_\vs)):=\frac{1}{N_1\,N_2}\sum_{k=0}^{N_2-1}\sum_{j=0}^{N_1-1} F(\bsr_{j,k})~~{\rm with}~~{\bf N}=(N_1,N_2)\,,
\end{equation}
where $\bsr_{j,k}=(\bsy_j,\bsz_k)$, and the QMC points  $\{\bsy_0, \ldots,\bsy_{N_1-1} \} \in U_1$ and $\{\bsz_0, \ldots,\bsz_{N_2-1} \} \in U_2$ (the QMC points are always constructed over the unit cube $[0,1]^s$ making the shift by $1/2$ necessary). With $i=1,2,$ we generate the $N_i: = b^{m_i}$ ($m_i\ge 1$ is integer and $b$ is prime)  QMC points using a \emph{generating vector} of polynomials with degree less than $ m_i$   with its coefficients taken from a finite field ${\mathbb Z}_{b}$; see \cite[Section 6]{DickEtAl2022} for full details.  
\begin{theorem}[{\cite[Theorem 6.1]{DickEtAl2022}}] \label{prop:qmc}
Let  $\bchi=(\chi_j)_{j\ge 1}$ and $\bvphi = (\varphi_j)_{j\ge 1}$ be two sequences of positive numbers such that $\sum_{j=1}^\infty \chi_j^p$ and $\sum_{j=1}^\infty \varphi_j^q$ being finite for some $0<p,\,q<1.$ Let $ \bchi_{s_1}=(\chi_j)_{1\le j\le s_1}$, $\bvphi_{s_2} = (\varphi_j)_{1\le j\le s_2}$, $\alpha \,:=\, \lfloor 1/p \rfloor +1$, and $\beta \,:=\, \lfloor 1/q \rfloor +1$. Assume that there exists a positive constant $c$ such that
\begin{equation} \label{eq:like-norm}
|\partial^{\balpha}_\bsy F(\bsr)| \le c|\balpha|! 
\bchi_{s_1}^{\balpha}\quad{\rm and}\quad |\partial^{\bbeta}_\bsz F(\bsr)| \le c|\bbeta|! 
\bvphi_{s_2}^{\bbeta},
\end{equation}
for all $\bsr=(\bsy,\bsz)\in U_1\times U_2$, 
$\balpha \in \{0, 1, \ldots, \alpha\}^{s_1}$, and $\bbeta \in \{0, 1, \ldots, \beta\}^{s_2}$.
Then one can construct two interlaced  high-order polynomial lattice rules of order $\alpha$ with $N_1=b^{m_1}$ points, and of order $\beta$ with $N_2=b^{m_2}$ points, with $|m_1q - m_2 p| < 1$, using a fast component-by-component (CBC) algorithm,  such that the quadrature  error  $|\I_\vs(\calL(\vu_\vs)) - \I_{\vs,{\bf N}}(\calL(\vu_\vs))|\le C\, N^{-\frac{1}{p+q}}$, where $N = N_1 N_2$ is the total number of QMC quadrature points. The constant $C$ depends on $p,q,$ and $b$, but is independent of $s_1$, $s_2$, $m_1$ and $m_2$. 
\end{theorem}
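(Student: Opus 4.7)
The plan is to exploit the product structure of the derivative bounds in~\eqref{eq:like-norm}: the function $F$ admits pure $\bsy$-partial derivative bounds with weights $\bchi_{s_1}$ and pure $\bsz$-partial derivative bounds with weights $\bvphi_{s_2}$, each of the Dick-type form $c|\balpha|!\,\bchi_{s_1}^{\balpha}$. I would view $F$ as living in a tensor product of two weighted reproducing kernel Hilbert spaces equipped with SPOD-type product weights built from $\bchi_{s_1}$ (for smoothness order~$\alpha$) and $\bvphi_{s_2}$ (for smoothness order~$\beta$); under this viewpoint \eqref{eq:like-norm} supplies a bound on the relevant partial norms of $F$ that is uniform in $(s_1,s_2)$.

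Second, I would use the tensor-product rule $\mathcal{Q}_1 \otimes \mathcal{Q}_2$, where $\mathcal{Q}_1$ is an interlaced polynomial lattice rule of order $\alpha$ with $N_1 = b^{m_1}$ points acting in $\bsy$, and $\mathcal{Q}_2$ is an interlaced polynomial lattice rule of order $\beta$ with $N_2 = b^{m_2}$ points acting in $\bsz$. Writing $\I_{\bsy}$ and $\I_{\bsz}$ for the partial integrations over $U_1$ and $U_2$, the telescoping identity
\begin{equation*}
\I_{\bsz}\I_{\bsy} F - \mathcal{Q}_2\mathcal{Q}_1 F \;=\; \I_{\bsz}\bigl[(\I_{\bsy}-\mathcal{Q}_1)F\bigr] + (\I_{\bsz}-\mathcal{Q}_2)[\mathcal{Q}_1 F]
\end{equation*}
reduces the total error to two one-sided QMC errors, each applied to a function whose relevant partial derivatives still satisfy~\eqref{eq:like-norm} (in the second term, $\mathcal{Q}_1 F$, being a linear combination of slices $F(\bsy_j,\cdot)$, inherits the $\bsz$-bound uniformly).

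Third, for each one-sided error I would invoke the Dick-type CBC construction of interlaced polynomial lattice rules in SPOD-weighted spaces. The specific choices $\alpha=\lfloor 1/p\rfloor+1$ and $\beta=\lfloor 1/q\rfloor+1$, together with the summabilities $\sum_j\chi_j^p<\infty$ and $\sum_j\varphi_j^q<\infty$, are exactly what is needed to produce SPOD weights whose associated worst-case errors admit bounds of the form $CN_1^{-1/p}$ and $CN_2^{-1/q}$, with constants independent of $s_1, s_2, m_1, m_2$. This dimension independence is the classical payoff of the product-weight framework.

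Finally I would balance the two rates. For fixed $N = N_1 N_2 = b^{m_1+m_2}$, the sum $N_1^{-1/p}+N_2^{-1/q}$ is minimised (to within a constant) when $m_1/p = m_2/q$; any integer pair with $|m_1 q - m_2 p| < 1$ lies within a constant factor of this optimum, and inserting $m_1 = p(\log_b N)/(p+q) + O(1)$ delivers both pieces of the error as $O(N^{-1/(p+q)})$. I expect the main obstacle to be not any single step but the careful bookkeeping inside the CBC/SPOD construction required to keep all constants simultaneously uniform in $s_1$, $s_2$, $m_1$, $m_2$; this is exactly the technical content deferred to \cite{DickEtAl2022}.
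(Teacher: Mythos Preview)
Your outline is sound and matches the standard tensor-product approach one would expect: split the error via a telescoping identity into a $\bsy$-only and a $\bsz$-only QMC error, apply the Dick--Kuo--Le~Gia--Nuyens--Schwab CBC construction with SPOD weights separately in each block (using $\alpha=\lfloor 1/p\rfloor+1$ and $\beta=\lfloor 1/q\rfloor+1$ to obtain rates $N_1^{-1/p}$ and $N_2^{-1/q}$ with dimension-independent constants), and then balance $m_1,m_2$ subject to $|m_1q-m_2p|<1$ to get $N^{-1/(p+q)}$.

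However, you should be aware that the present paper does \emph{not} actually prove this theorem: it is quoted verbatim as \cite[Theorem~6.1]{DickEtAl2022}, and the authors explicitly refer the reader to that reference for the proof (see the sentence immediately preceding the theorem). So there is no ``paper's own proof'' to compare against here; your sketch is essentially a reconstruction of the argument in the cited work, and your closing remark that the technical bookkeeping is ``deferred to \cite{DickEtAl2022}'' is exactly what the present paper does too.
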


In the next theorem, we state the error from approximating the  integral  in \eqref{F finite} by a QMC rule in dimension $s_1+s_2$ directly. In this approach we combine the different weights arising from simulating $\mu$ and $\lambda$. The proof follows directly from \cite[Theorem 3.1]{DickKuoGiaNuynsSchwab2014}.
\begin{theorem}\label{prop:qmc6.3}
 Let $\bchi$ and $\bvphi$ be the two sequences introduced in Theorem \ref{prop:qmc}, and let $\gamma \,:=\min(\lfloor 1/p \rfloor,\lfloor 1/q \rfloor) +1.$ For any   $\boldsymbol{\gamma} \in \{0, 1, \ldots, \gamma\}^{s_1+s_2}$, assume that $F$ satisfies
 \begin{equation} \label{eq:like-norm mixed}
|\partial^{\boldsymbol{\gamma}}_{\bsr} {F}(\bsr)|=|\partial^{\boldsymbol{\gamma}_1,\boldsymbol{\gamma}_2}_{\bsy,\bsz} {F}(\bsy,\bsz)| \le c|\boldsymbol{\gamma}|! 
\bchi_{s_1}^{\boldsymbol{\gamma}_1} 
\bvphi_{s_2}^{\boldsymbol{\gamma}_2},~~{\rm for~any}~~\bsr=(\bsy,\bsz)\in U_1\times U_2=[0,1]^{s_1+s_2}, 
\end{equation}
where the vectors $\boldsymbol{\gamma}_1$ and $\boldsymbol{\gamma}_2$ are formed from the first $s_1$ and last $s_2$ components of $\boldsymbol{\gamma},$ respectively, and  the constant $c$ is independent of $\bsr$, $s_1$, $s_2$, and of $p$ and $q.$ Then one can construct an interlaced polynomial lattice rule of order $\gamma$ with $N=b^m$  points using a fast CBC algorithm  so that 
\[|\I_\vs(\calL(\vu_\vs)) - \I_{\vs,{\bf N}}(\calL(\vu_\vs))|=\Big|\I_\vs(\calL(\vu_\vs))-\frac{1}{N} \sum_{n=0}^{N-1} \calL(\vu_\vs(\boldsymbol{r}_n-\frac12))\Big|\le C\,N^{-\min(1/p,1/q)}\,,\]
where the generic constant $C$ depends on $b,p$ and $q$, but is independent of ${\bf s}$ and $m.$ 
\end{theorem}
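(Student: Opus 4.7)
The plan is to reduce the theorem to a single application of the component-by-component construction theorem of Dick, Kuo, Le Gia, Nuyens, and Schwab (2014) by merging the two parameter vectors into a single vector of length $s_1+s_2$. Define the combined weight sequence $\boldsymbol{\omega}=(\omega_1,\dots,\omega_{s_1+s_2})$ by $\omega_j=\chi_j$ for $1\le j\le s_1$ and $\omega_{s_1+k}=\varphi_k$ for $1\le k\le s_2$. Then the hypothesis \eqref{eq:like-norm mixed} translates into the single bound
\[
 |\partial^{\boldsymbol{\gamma}}_{\bsr} F(\bsr)| \;\le\; c\,|\boldsymbol{\gamma}|!\,\prod_{j=1}^{s_1+s_2}\omega_j^{\gamma_j},
 \qquad \boldsymbol{\gamma}\in\{0,1,\ldots,\gamma\}^{s_1+s_2},
\]
which is precisely the smoothness/growth condition on the mixed partial derivatives under which an interlaced polynomial lattice rule of order $\gamma$ with product-and-order-dependent (POD) weights is known to be constructible by CBC.

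Next, I would verify the summability requirement needed for that construction. Since $\sum_j\chi_j^p<\infty$ forces $\chi_j\to 0$, and similarly for $\varphi_k$, both sequences are bounded, so for any $r\ge\max(p,q)$ one has $\sum_j\chi_j^r<\infty$ and $\sum_k\varphi_k^r<\infty$, hence $\sum_{j=1}^{s_1+s_2}\omega_j^r<\infty$ uniformly in $s_1,s_2$. Taking $r=\max(p,q)$, the interlacing order required by the Dick--Kuo--Le Gia--Nuyens--Schwab theorem is $\lfloor 1/r\rfloor+1=\lfloor \min(1/p,1/q)\rfloor+1=\min(\lfloor 1/p\rfloor,\lfloor 1/q\rfloor)+1=\gamma$, matching the statement.

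The third step is to invoke the CBC construction itself. With the POD weights $\gamma_{\mathbf{u}}=\prod_{j\in\mathbf{u}}\Gamma_j$ chosen as in the cited theorem (the optimal $\Gamma_j$ being proportional to an explicit function of $\omega_j$ and $|\boldsymbol{u}|!$), the fast CBC algorithm produces an interlaced polynomial lattice rule of order $\gamma$ with $N=b^m$ points whose worst-case error in the corresponding weighted space is bounded by $C\,N^{-1/r}=C\,N^{-\min(1/p,1/q)}$, where the constant $C$ is independent of $s_1,s_2$ and $m$ because the weighted norm is finite with a bound that does not involve the dimension. Pairing this worst-case error with the bound \eqref{eq:like-norm mixed} on $F$ yields the quoted estimate on $|\I_\vs(\calL(\vu_\vs))-\I_{\vs,{\bf N}}(\calL(\vu_\vs))|$.

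The main obstacle, and the only part that requires genuine care, is the dimension-independence of the constant $C$. This in turn rests on showing that the POD weights can be chosen so that the implied norm of $F$ remains uniformly bounded as $s_1+s_2\to\infty$; this is exactly the point where the summability of $\sum_j\omega_j^r$ with $r=\max(p,q)$ enters, and it is also where the precise shape of the bound \eqref{eq:like-norm mixed} (with $|\boldsymbol{\gamma}|!$ rather than a product of factorials) is essential. All of this is handled by Theorem 3.1 of \cite{DickKuoGiaNuynsSchwab2014}, so once the reduction to the combined sequence $\boldsymbol{\omega}$ is made explicit the conclusion follows directly.
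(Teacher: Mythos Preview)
Your proposal is correct and matches the paper's own approach: the paper simply states that the result ``follows directly from \cite[Theorem~3.1]{DickKuoGiaNuynsSchwab2014}'', and your reduction via the combined weight sequence $\boldsymbol{\omega}$ with summability exponent $r=\max(p,q)$ is exactly the setup needed to invoke that theorem. The additional details you supply (checking $\sum_j\omega_j^r<\infty$, identifying the interlacing order $\gamma=\lfloor 1/r\rfloor+1$, and noting that the POD-weight choice ensures dimension-independent constants) are implicit in that citation.
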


In the third step of our error analysis, we  approximate  the unknown truncated 
solution~$\vu_\vs$ over the physical domain $\Omega$  by $\vu_{{\vs},h} \in \vV_h$ (see
Section~\ref{Sec: FEM} for the definition of the finite element space~$\vV_h$). However,
before summarising these steps  in the upcoming  theorem,  we introduce  the following 
spaces and notations.  Let $\|\vw\|$ denote the norm of a vector field~$\vw$ 
in~${\bf L}^2(\Omega)=[L^2(\Omega)]^2$, and let $\|\vw\|_{\bf V}$~and $\|\vw\|_{\bf H}$ 
denote the norms of $\vw$ in $\vV=[H^1_0(\Omega)]^2$~and ${\bf H}=[H^2(\Omega)]^2$, 
respectively.
Here, $H^1_0(\Omega)$~and $H^2(\Omega)$ are the usual Sobolev spaces of scalar-valued 
functions on~$\Omega$.  Finally, $\vV^*$ is  the dual space of~$\vV$ with respect to 
the inner product in~${\bf L}^2(\Omega)$.


In the next theorem,  the generic constant  $C$ is independent of the finite element mesh size~$h$,  the dimensions $s_1$ and $s_2,$  the number of QMC point    $N$, and the Lam\'e parameter factor  $\Lambda$. However, $C$ may depend on $\Omega$ and other parameters including  $p,q,b,$ the upper/lower bounds of $\mu$ and $\widehat \lambda$, and the upper bounds of  $\|\nabla \mu\|_{L^\infty(\Omega)}$ and $\|\nabla \widehat \lambda\|_{L^\infty(\Omega)}.$  Write $\bsr^\vs=(\bsy^{s_1},\bsz^{s_2})$ for the truncated version
of~$\bsr=(\bsy,\bsz)$.

\begin{theorem}\label{sec: main results}
Assume that $\Omega$ is convex  and  \eqref{ass A1}--\eqref{ass A3} are satisfied. For every $\bsr=(\bsy,\bsz) \in U \times U,$ let $\vu \in {\bf H}$ be the solution of problem \eqref{eq:L1} and $\vu_{\vs,h}$ be the nonconforming finite element solution, defined as in \eqref{FEM new} with $\bsr^\vs$ in place of $\bsr$ for $s_1,s_2 \ge 1.$  Let $\alpha \,:=\, \lfloor 1/p \rfloor +1$ and $\beta \,:=\, \lfloor 1/q \rfloor +1$, where $p$ and $q$ are those in \eqref{ass A3}. Then one can construct two interlaced   high-order polynomial lattice rules of order $\alpha$ with $N_1$ points, and of order $\beta$ with $N_2$ points,  such that 
\[    |\I(\calL(\vu))-\I_{\vs,{\bf N}}(\calL(\vu_{\vs,h}))|\le C\,\Big(s_1^{1-1/p}+ s_2^{1-1/q}+N^{-1/(p+q)}\Big)   \|\vf\|_{\vV^*}   \|\calL\|_{\vV^*}
+C \, h^2 \|\vf\| \|\calL\|\,.\]

Further, if we use a QMC rule in dimension $s_1 + s_2$ directly with $N$ points as in Theorem~\ref{prop:qmc6.3}, then the above estimate remains valid with  $N^{-\min(1/p, 1/q)}$ in place of $N^{-1/(p+q)}$. 
\end{theorem}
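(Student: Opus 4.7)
The plan is to split the total error into the three familiar contributions via the triangle inequality and then invoke the results already stated or developed elsewhere in the paper. Concretely, writing $\vu_\vs$ for the solution of \eqref{eq:L1} with the truncated parameters, I would decompose
\begin{align*}
|\I(\calL(\vu))-\I_{\vs,{\bf N}}(\calL(\vu_{\vs,h}))|
&\le |\I(\calL(\vu))-\I(\calL(\vu_{\vs}))|\\
&\quad+|\I(\calL(\vu_\vs))-\I_{\vs,{\bf N}}(\calL(\vu_\vs))|\\
&\quad+|\I_{\vs,{\bf N}}(\calL(\vu_\vs))-\I_{\vs,{\bf N}}(\calL(\vu_{\vs,h}))|.
\end{align*}
The first term is the dimension-truncation error, and I would bound it directly by the forthcoming Theorem~\ref{Truncating error} of Section~\ref{Sec: Truncation}, which under \eqref{ass A1}, \eqref{ass A5}, \eqref{ass A3} gives the $\Lambda$-independent bound $C(s_1^{1-1/p}+s_2^{1-1/q})\|\vf\|_{\vV^*}\|\calL\|_{\vV^*}$.

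For the second term I would apply Theorem~\ref{prop:qmc} to the integrand $F(\bsr)=\calL(\vu_\vs(\cdot,\bsr-\tfrac12))$. This requires verifying the mixed-derivative bound \eqref{eq:like-norm} with summable sequences $\bchi$ and $\bvphi$. Here is where the parametric regularity developed in Section~\ref{VFR} enters: I would take $\chi_j=\|\psi_j\|_{L^\infty(\Omega)}$ and $\varphi_j=\|\phi_j\|_{L^\infty(\Omega)}$, which by \eqref{ass A3} satisfy $\sum\chi_j^p<\infty$ and $\sum\varphi_j^q<\infty$, and use the $\Lambda$-independent bounds on $\partial^{\balpha}_{\bsy}\partial^{\bbeta}_{\bsz}\vu_\vs$ proved there, together with the continuity $|\calL(\vw)|\le\|\calL\|_{\vV^*}\|\vw\|_{\vV}$, to deduce $|\partial^{\balpha}_{\bsy}F|\le c|\balpha|!\,\bchi_{s_1}^{\balpha}\|\vf\|_{\vV^*}\|\calL\|_{\vV^*}$ and the analogous bound in $\bsz$. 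Theorem~\ref{prop:qmc} then delivers the $C\,N^{-1/(p+q)}\|\vf\|_{\vV^*}\|\calL\|_{\vV^*}$ factor.

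The third term is handled by the linearity of the QMC rule: it equals the average of $|\calL(\vu_\vs(\cdot,\bsr_{j,k}-\tfrac12)-\vu_{\vs,h}(\cdot,\bsr_{j,k}-\tfrac12))|$ over the $N_1N_2$ quadrature nodes, and each summand is controlled by the nonconforming FEM error estimate of Section~\ref{Sec: FEM}, which on the convex polygon $\Omega$ yields $C\,h^2\|\vf\|\,\|\calL\|$ uniformly in~$\bsr$ and in~$\Lambda$ (this locking-free uniformity is the whole point of the new FEM). Averaging preserves the bound. Putting the three pieces together gives the claimed estimate; the last sentence of the theorem follows by replacing Theorem~\ref{prop:qmc} by Theorem~\ref{prop:qmc6.3}, whose hypothesis \eqref{eq:like-norm mixed} is verified by the same mixed $(\bsy,\bsz)$-regularity of $\vu_\vs$.

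The genuinely non-routine step is the verification of \eqref{eq:like-norm} (or \eqref{eq:like-norm mixed}) with constants independent of $\Lambda$; the $\Lambda$-free parametric regularity in Section~\ref{VFR} is doing the real work, and everything else in this proof is assembly by triangle inequality and invocation of Theorems~\ref{Truncating error}, \ref{prop:qmc}, \ref{prop:qmc6.3}, together with the uniform FEM estimate.
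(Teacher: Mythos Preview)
Your proposal is correct and follows essentially the same route as the paper: the same three-term triangle-inequality split, with Theorem~\ref{Truncating error} for the truncation term, Theorem~\ref{prop:qmc} (verified via the $\Lambda$-independent parametric regularity of Theorem~\ref{lem: vu bound y z}) for the QMC term, and Theorem~\ref{Convergence theorem QMC} for the FEM term. The only cosmetic difference is that the paper writes $\I_\vs(\calL(\vu_\vs))$ where you write $\I(\calL(\vu_\vs))$, but since $\vu_\vs$ depends only on $\bsr^\vs$ and the measure is product uniform these coincide; also, the sequences $\bchi,\bvphi$ should strictly be the $\widetilde b_j,\widehat b_j$ of Theorem~\ref{lem: vu bound y z} rather than $\|\psi_j\|_{L^\infty},\|\phi_j\|_{L^\infty}$ themselves, but as these differ only by fixed multiplicative constants the summability from~\eqref{ass A3} is unaffected.
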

\begin{proof} We start our proof by noticing that the error $|\I(\calL(\vu))-\I_{\vs,{\bf N}}(\calL(\vu_{\vs,h}))|$ is bounded by  
\begin{equation}\label{combine}
 |\I(\calL(\vu))-\I_{\vs}(\calL(\vu_\vs))|
+|\I_{\vs}(\calL(\vu_\vs))-\I_{\vs,{\bf N}}(\calL(\vu_\vs))|+|\I_{\vs,{\bf N}}(\calL(\vu_\vs-\vu_{\vs,h}))|.
\end{equation} 
Since $d\bsy$ and $d\bsz$ are the uniform probability measures with  i.i.d. uniformly distributed parameters on $U$, by using Theorem \ref{Truncating error} we obtain 
\[\I(\calL(\vu))-\I_{\vs}(\calL(\vu_\vs))
=\int_{U}\int_{U}\calL\Big(\vu(\cdot,\bsr)-\vu_{\vs}(\cdot,\bsr^\vs)  \Big)\,d\bsr\le C\,\Big(s_1^{1-1/p}+  s_2^{1-1/q}\Big)   \|\vf\|_{\vV^*} \|\calL\|_{\vV^*}\,.\]
To bound the second term in \eqref{combine}, which is the QMC quadrature error, we apply    Theorem~\ref{prop:qmc} with $F(\bsr):=\calL\Big(\bf \vu_\vs\big(\cdot,\bsr-{\bf \frac{1}{2}}\big)\Big).$ Since $|\partial_{\bsr}^{\balpha,\bbeta}F(\bsr)|\le \|\calL\|_{\vV*} \,\|\partial_{\bsr}^{\balpha,\bbeta}\vu_\vs\big(\cdot,\bsr-{\bf \frac12}\big)\|_{\vV}$, the needed regularity conditions in \eqref{eq:like-norm} are satisfied owing to Theorem \ref{lem: vu bound y z}. Therefore, 
\[|\I_{\vs}(\calL(\vu_\vs))-\I_{\vs,{\bf N}}(\calL(\vu_\vs))|\le C\,N^{-\frac{1}{p+q}}   \|\vf\|_{\vV^*} \|\calL\|_{\vV^*}\,.\]

To estimate the third term in \eqref{combine}, we refer to  Theorem \ref{Convergence theorem QMC} by taking the vectors $\bsy$ and $\bsz$ with $y_j = z_k=0$ for $j>s_1$ and $k>s_2$. Hence, the proof of this theorem is complete. 
\end{proof}

\section{Regularity properties}\label{VFR}
This section is devoted to showing some useful regularity properties of the weak solution with respect to both the  physical variable $\bsx$ and parametric variables $\bsy$ and $\bsz$. Following \cite{DickEtAl2022},  the weak formulation of parameter-dependent equation  \eqref{eq:L1} is:  Find $\vu(\cdot,\bsr) \in \vV$ so that
\begin{equation}\label{para weak}
 \calB(\bsr;\vu(\cdot,\bsr), \vv) = \ell(\vv), \quad \text{for all~~$\vv \in \vV$ and  for every  $\bsr=(\bsy,\bsz) \in U\times U$,}
\end{equation}
 where  the linear functional $\ell$ and the bilinear operator~$\calB$ are defined by
\begin{equation*}\label{eq: bilinear}
 \ell(\vv) = \int_\Omega \vf \cdot \vv \,d\bsx 
 \quad\text{and}\quad
  \calB(\bsr;\vu(\cdot,\bsr), \vv) = \int_\Omega [2\mu\, \veps(\vu(\cdot,\bsr)):\veps(\vv)+\lambda \nabla \cdot \vu(\cdot,\bsr) \nabla \cdot \vv] \,d\bsx.
\end{equation*}
 The colon operator is the inner product between tensors. Since $\max\{\frac{\|\nabla \cdot \vw\|}{\sqrt{2}},\|\veps(\vw)\|\} \le  \|\nabla \vw\|$,  
\begin{equation}\label{eq: bounded}
\calB(\bsr;\vv,\vw) \le C_{\lambda,\mu}\|\nabla \vv\|\,\|\nabla \vw\|\le 
C_{\lambda,\mu}\| \vv\|_{\vV}\,\|\vw\|_{\vV},~~{\rm for}~~\vv,\vw \in \vV.
\end{equation}
 So,  $\calB(\bsr;\cdot,\cdot)$ is bounded over $\vV \times \vV$. The coercivity property  of $\calB(\bsr;\cdot,\cdot)$ on $\vV$  follows from the nonnegativity assumptions on $\mu$ and $\lambda$ and Korn's inequality as  \begin{equation}\label{eq: coer B}
 \calB(\bsr;\vv,\vv)\ge  2 \mu_{\min} \|\veps(\vv)\|^2 \ge C \mu_{\min} \| \vv\|_{\vV}^2, \quad \vv \in \vV.
\end{equation}
Owing to these two properties, and since $|\ell(\vv)|\le \|\vf\|_{\vV^*}\|\vv\|_{\vV}$,  an application of the Lax-Milgram theorem completes the proof of the next theorem.   
\begin{theorem}\label{thm: unique solution}
For every $f \in \vV^*$, and for every $\bsr  \in U\times U$, problem \eqref{para weak} has a unique solution. 
\end{theorem}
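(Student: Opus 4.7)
The plan is simply to verify the three hypotheses of the Lax--Milgram theorem for each fixed parameter $\bsr=(\bsy,\bsz) \in U\times U$, since almost all the work has already been isolated in the estimates \eqref{eq: bounded} and \eqref{eq: coer B} leading up to the statement. So I would first remark that, under Assumption \eqref{ass A1} together with the uniform positivity of $\mu$ and $\widehat\lambda$ on $\Omega\times U$, the pointwise values of the Lamé coefficients satisfy $\mu_{\min}\le \mu(\cdot,\bsy)\le \mu_{\max}$ and $\Lambda\widehat\lambda_{\min}\le\lambda(\cdot,\bsz)\le\Lambda\widehat\lambda_{\max}$ almost everywhere on $\Omega$, so the constant $C_{\lambda,\mu}$ appearing in \eqref{eq: bounded} is finite (and bounded by a fixed multiple of $\mu_{\max}+\Lambda\widehat\lambda_{\max}$) uniformly in $\bsr$.

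Next I would record that \eqref{eq: bounded} gives boundedness of $\calB(\bsr;\cdot,\cdot)$ on $\vV\times\vV$, so only coercivity remains to be justified. This is where Korn's inequality enters: since $\vV=[H^1_0(\Omega)]^2$ and $\Omega$ is a (convex) polygonal domain, there exists a constant $C_K>0$ depending only on $\Omega$ such that $\|\veps(\vv)\|^2\ge C_K\|\vv\|_{\vV}^2$ for all $\vv\in\vV$. Combining with the nonnegativity of $\lambda$ and the lower bound $\mu\ge \mu_{\min}$ yields
\[
\calB(\bsr;\vv,\vv) \ge 2\mu_{\min}\|\veps(\vv)\|^2 + 0 \ge 2\mu_{\min}C_K\|\vv\|_{\vV}^2,
\]
which is precisely \eqref{eq: coer B}, with a coercivity constant that is moreover independent of $\bsr$ and of $\Lambda$. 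The continuity of $\ell$ on $\vV$ is immediate from the Cauchy--Schwarz inequality and the continuous embedding ${\bf L}^2(\Omega)\hookrightarrow \vV^*$, and was already noted in the paragraph preceding the theorem.

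With these three ingredients in hand, I would conclude by a direct invocation of the Lax--Milgram lemma: for each fixed $\bsr$, the bilinear form $\calB(\bsr;\cdot,\cdot)\colon \vV\times\vV\to\R$ is bounded and coercive, and $\ell\in\vV^*$, so there exists a unique $\vu(\cdot,\bsr)\in\vV$ satisfying \eqref{para weak}. No step here is really an obstacle; the only subtlety worth flagging is that one should emphasize that the coercivity constant is $\Lambda$-independent, which is essential for the $\Lambda$-uniform regularity analysis carried out in the subsequent Section~\ref{VFR}.
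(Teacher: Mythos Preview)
Your proposal is correct and follows essentially the same route as the paper: the paper simply notes that \eqref{eq: bounded} gives boundedness, \eqref{eq: coer B} gives coercivity (via Korn's inequality and nonnegativity of $\lambda$), and $|\ell(\vv)|\le \|\vf\|_{\vV^*}\|\vv\|_{\vV}$ gives continuity of the right-hand side, then invokes Lax--Milgram. One small clean-up: since the hypothesis is $\vf\in\vV^*$ (not $\vf\in{\bf L}^2(\Omega)$), the continuity of $\ell$ is just the definition of the dual norm rather than an embedding argument, and Korn's inequality on $[H^1_0(\Omega)]^2$ does not require convexity of $\Omega$.
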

\subsection{Regularity in $\bsx$}
For the convergence of the nonconforming finite element solution, we derive next some regularity estimates of the parametric solution of \eqref{para weak}. For the planar  linear elasticity problem with constant Lam\'e parameters $\lambda$ and $\mu$, these estimates were proved in  \cite[Theorem A.1]{Vogelius1983} for domains with smooth boundary. Later on, some ideas were borrowed  from \cite{Vogelius1983} to show these results  for the case of convex polygonal domains, see \cite[Lemma 2.2]{BrennerSung1992}. Our results extend the ones in the existing literature  to the case of variable  random parameters $\lambda$ and $\mu$. Some ideas from \cite[Lemma 2.2]{BrennerSung1992} are used in our proof. 
\begin{theorem}\label{lem: vu bound}
Assume that \eqref{ass A4} is satisfied. Then, for every $f \in \vV^*$, and for every $\bsr  \in U\times U$, the parametric weak  solution $\vu=\vu(\cdot,\bsr)$ of  problem \eqref{para weak} satisfies the estimate
\begin{equation}\label{a priori}
  \|\vu\|_{\vV} + \Lambda \|\nabla \cdot \vu\| 
 \le C\,  \|\vf\|_{\vV^*}\,.
\end{equation}
Furthermore, if $\Omega$ is a convex domain (or if $\Omega$ has a smooth boundary) and   $\vf \in {\bf L}^2(\Omega)$, then  $\vu(\cdot,\bsr) \in \vV \cap {\bf H}$, which is a strong solution of \eqref{eq:L1}, and the following estimate holds true 
\begin{equation}\label{a priori H2}
  \|\vu\|_{\bf H} +  \|(\lambda+\mu) \nabla \cdot \vu\|_{\vV} 
 \le  C\,\|\vf\|,\quad {\rm for~every}~~\bsr  \in U\times U\,.
\end{equation} 
The  constant $C$ in \eqref{a priori}  depends on  $\Omega$, $\mu$ and $\widehat\lambda$, while in \eqref{a priori H2}, $C$ depends (linearly) on  $\|\nabla \mu(\cdot,\bsy)\|_{L^\infty(\Omega)}$ and $\|\nabla \widehat \lambda(\cdot,\bsz)\|_{L^\infty(\Omega)}$  as well. In both \eqref{a priori} and \eqref{a priori H2}, $C$  is independent of $\Lambda$.
\end{theorem}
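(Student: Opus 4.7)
I will treat the two estimates in sequence, isolating the $\Lambda$-dependence through the auxiliary pressure variable $p:=(\lambda+\mu)\nabla\cdot\vu$, broadly following the strategy of \cite[Lemma~2.2]{BrennerSung1992} but adapted to variable Lam\'e coefficients.

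For the $H^1$ bound \eqref{a priori}, testing \eqref{para weak} with $\vv=\vu$ and invoking the coercivity \eqref{eq: coer B} immediately yields $\|\vu\|_{\vV}\le C\mu_{\min}^{-1}\|\vf\|_{\vV^*}$, together with the weaker divergence bound $\Lambda^{1/2}\|\nabla\cdot\vu\|\le C\|\vf\|_{\vV^*}$. To promote the latter to a full power of $\Lambda$, I would exploit $\nabla\cdot\vu\in L^2_0(\Omega)$ (since $\vu|_\Gamma=\mathbf{0}$) and invoke Bogovskii's lifting: there exists $\vv\in\vV$ with $\nabla\cdot\vv=\nabla\cdot\vu$ and $\|\vv\|_{\vV}\le C\|\nabla\cdot\vu\|$. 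Testing \eqref{para weak} against this $\vv$ gives
\begin{equation*}
\int_\Omega\lambda\,|\nabla\cdot\vu|^2\,d\bsx=\ell(\vv)-\int_\Omega 2\mu\,\veps(\vu):\veps(\vv)\,d\bsx,
\end{equation*}
and Cauchy--Schwarz combined with $\lambda\ge\Lambda\widehat\lambda_{\min}$ and the $\vV$-bound on $\vu$ delivers $\Lambda\|\nabla\cdot\vu\|\le C\|\vf\|_{\vV^*}$.

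For the $H^2$ bound \eqref{a priori H2} I would expand $\nabla\cdot\vsigma$ via the product rule, using $\nabla\cdot(2\mu\,\veps(\vu))=\mu\Delta\vu+\mu\nabla(\nabla\cdot\vu)+2(\nabla\mu)\cdot\veps(\vu)$ and $\nabla(\lambda\nabla\cdot\vu)=\lambda\nabla(\nabla\cdot\vu)+(\nabla\cdot\vu)\nabla\lambda$, to recast the strong form of \eqref{eq:L1} as the Stokes-type system
\begin{equation*}
-\mu\Delta\vu-\nabla p=\vf+2(\nabla\mu)\cdot\veps(\vu)-(\nabla\cdot\vu)\nabla\mu,\qquad \nabla\cdot\vu=\frac{p}{\lambda+\mu}.
\end{equation*}
By the $H^1$ estimate already obtained together with assumption \eqref{ass A4}, the right-hand side of the momentum equation is controlled in $\bfL^2(\Omega)$ by $\|\vf\|+C(\|\nabla\mu\|_{L^\infty(\Omega)}+\|\nabla\widehat\lambda\|_{L^\infty(\Omega)})\,\|\vf\|$. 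I would then split $\vu=\vu_0+\vu_1$, where $\vu_0$ solves a genuine divergence-free Stokes problem with viscosity $\mu$ and pressure $p$, and $\vu_1\in\vV$ is a Bogovskii-type lift with $\nabla\cdot\vu_1=p/(\lambda+\mu)$ and $\|\vu_1\|_{\bfH}\le C\Lambda^{-1}\|p\|_{\vV}$. The convexity of $\Omega$ is used precisely here to apply Stokes regularity on a convex polygon (Kellogg--Osborn--Dauge), which yields $\|\vu_0\|_{\bfH}+\|p\|_{\vV}\le C\|\vf\|$ with a constant independent of $\Lambda$, while the $\vu_1$-contribution is absorbed on the left for $\Lambda$ large.

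The main obstacle will be the $\Lambda$-uniformity of the Stokes regularity constant when the viscosity $\mu$ is only in $W^{1,\infty}(\Omega)$. I would freeze $\mu$ pointwise and treat its variation as a perturbation: the constant-coefficient Stokes theorem on convex polygons supplies the leading bound, and the $W^{1,\infty}$ variability contributes a forcing of size $\|\nabla\mu\|_{L^\infty(\Omega)}\|\vu\|_{\vV}$, matching the linear dependence claimed in the theorem. The delicate bookkeeping is to ensure that no hidden $\Lambda$ enters through the Stokes inf-sup constant and that the Dirichlet boundary conditions are preserved through the $\vu_0$--$\vu_1$ splitting; both follow from standard properties of the Bogovskii construction.
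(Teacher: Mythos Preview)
Your plan is essentially the paper's own proof. For \eqref{a priori} the paper does exactly what you describe: test with $\vu$ to get $\|\vu\|_\vV\le C\|\vf\|_{\vV^*}$, then test with an Arnold--Scott--Vogelius (Bogovskii) lift $\vu^*\in\vV$ of $\nabla\cdot\vu$ to upgrade the divergence estimate to a full power of $\Lambda$. For \eqref{a priori H2} the paper also reduces to a Stokes problem on the convex polygon and invokes Kellogg--Osborn regularity, splitting $\vu=\vw+\vu^*$ with $\vu^*$ the $H^2$-level ASV lift of $\nabla\cdot\vu$ and $\vw$ divergence-free, then absorbing the lift contribution via its $\Lambda^{-1}$ smallness---precisely your $\vu_0$/$\vu_1$ decomposition.

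The one practical difference worth noting is that the paper sidesteps the ``freeze $\mu$ and perturb'' issue you raise in your last paragraph: rather than keep a variable-viscosity Stokes operator $-\mu\Delta$, it first divides the strong form by $\mu$ and takes $p:=-\dfrac{\lambda+\mu}{\mu}\,\nabla\cdot\vu$, obtaining the genuine constant-coefficient system $-\Delta\vu+\nabla p=\widetilde{\bf G}$. Kellogg--Osborn then applies directly, and all the $W^{1,\infty}$ variability of $\mu$ (and of $\widehat\lambda$) is pushed into the forcing $\widetilde{\bf G}$, where it contributes exactly the linear dependence on $\|\nabla\mu\|_{L^\infty}$ and $\|\nabla\widehat\lambda\|_{L^\infty}$ claimed in the theorem. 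Your route would also work, but this shortcut is cleaner.
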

\begin{proof}
From the coercivity property in \eqref{eq: coer B} and the weak formulation in \eqref{para weak},  we have   
\begin{equation}\label{eq: H1 bound of u}
C \mu_{\min}\| \vu\|_{\vV}^2\le  2 \mu_{\min}  \|\veps(\vu)\|^2 \le   \calB(\bsr;\vu,\vu)=\ell(\vu) \le \|\vf\|_{\vV^*} \|\vu\|_{\vV},
\end{equation} 
for every $\bsr \in U\times U$. Since $\vu(\cdot,\bsr) \in \vV,$  by \cite[Theorem 3.1 and (3.2)]{ArnoldScottVogelius1988}, there exists $\vu^* \in \vV$ such that $\nabla \cdot \vu= \nabla \cdot \vu^*$ and $\|\vu^*\|_{\vV}\le C\|\nabla \cdot \vu\|$. 
Using this in the weak formulation  \eqref{para weak}, after some calculations, we reach  
\begin{multline}\label{eq:u*}
  \Lambda \widehat \lambda_{\min}  \|\nabla \cdot \vu\|^2 \le \int_\Omega \lambda \nabla \cdot \vu \nabla \cdot \vu^* \,\,d\bsx
 = \ell(\vu^*)-\int_\Omega 2\mu \veps(\vu):\veps(\vu^*)\,\,d\bsx\\
\le C\|\vf\|_{\vV^*} \|\vu^*\|_{\vV} +2\mu_{\max} \|\veps(\vu)\|\,\|\veps(\vu^*)\|
\le C\Big(\|\vf\|_{\vV^*}  +2\mu_{\max} \|\vu\|_{\vV}\Big)\|\nabla \cdot \vu\|\,.    
\end{multline}
Thus,  $\Lambda \widehat \lambda_{\min}  \|\nabla \cdot \vu\| \le C\Big(\|\vf\|_{\vV^*}  +2\mu_{\max} \|\vu\|_{\vV}\Big)$. Combining this estimate with the inequality $\| \vu\|_{\vV}  \le \frac{C}{\mu_{\min}}\|\vf\|_{\vV^*}$, which is due to 
\eqref{eq: H1 bound of u}, completes the proof of the regularity estimate in \eqref{a priori}. 

We turn now to the case of a convex domain $\Omega$. Since  $\vf \in {\bf L}^2(\Omega)$ and since the bilinear operator $\calB(\bsr;\cdot,\cdot)$ is $\vV-$elliptic for every $\bsr \in U\times U$, the solution $\vu(\cdot,\bsr)$ of \eqref{para weak} is in the  space   $\bH$ owing to the elliptic regularity theory, and consequently, $\vu(\cdot,\bsr)$ is a strong solution of \eqref{eq:L1}. 
For more details about strongly elliptic systems and regularity, see \cite[Theorem~4.18]{McLean2000}.  

To show the regularity estimate in \eqref{a priori H2}, we use first  the identity 
 $\nabla \cdot (g {\bf A})=g \nabla \cdot {\bf A} +\nabla g\cdot {\bf A}$ (with $g \in W^{1,\infty}(\Omega)$, ${\bf A}=[a_{ij}]$ is an $m$-by-$m$ matrix-valued function, and so  $\nabla g\cdot {\bf A}=[\nabla g \cdot {\bf A}_i]$ is an $m$-by-$1$ column vector where ${\bf A}_i$ is the $i$th row of ${\bf A}$), and  notice that 
\[\nabla \cdot(\vsigma(\vu,\lambda,\mu))= \lambda\nabla \cdot\Big(\nabla\cdot \vu\, I\Big) 
+ 2\mu \nabla \cdot\veps(\vu)+\nabla  \lambda \cdot\Big(\nabla\cdot \vu\, I\Big) 
+ 2\nabla \mu  \cdot\veps(\vu)\,.\]
However  $\nabla \cdot (\nabla \cdot \vu I)=\nabla (\nabla \cdot \vu)$ and $2 \nabla\cdot \veps(\vu) = \Delta \vu +\nabla(\nabla \cdot \vu)$, so problem \eqref{eq:L1} can be reformulated as: for $\bsr \in U\times U,$  
\begin{equation}\label{Strong form 0} -\mu\Delta  \vu -(\lambda+\mu)\nabla(\nabla \cdot \vu)= \vf+\nabla \lambda\, \Big(\nabla\cdot \vu\Big) 
+ 2\nabla \mu  \cdot \veps(\vu)=:\mu\,{\bf G}.
\end{equation}
For later use,  it is not hard  to observe that 
\begin{equation}\label{Prestimate}
\|\nabla(\nabla \cdot \vu)\|\le \|\eta^{-1} \Delta  \vu\| +\|\eta^{-1}{\bf G}\|~~{\rm with}~~\eta=\frac{ \lambda+\mu}{\mu}=\frac{\lambda}{\mu}+1\,.
\end{equation}
By dividing now both sides of \eqref{Strong form 0}  by $\mu$ and  letting  $p=-\eta \nabla \cdot \vu  \in H^1(\Omega)$, after some simplifications,  we obtain 
\begin{equation}\label{Strong form}
-\Delta \vu +\nabla p= {\bf \widetilde G}:=\vf/\mu + 2\nabla \mu  \cdot\veps(\vu)/\mu+ \nabla \cdot \vu\,\Big( \lambda/\mu^2\Big)\nabla \mu\,.\end{equation} 
We borrow now the idea used in \cite[Lemma 2.2]{BrennerSung1992}, specifically (2.19) and (2.20) therein, 
\begin{equation}\label{u*}
\exists ~    \vu^* \in {\bf H}\cap \vV~{\rm such~that}~~\nabla \cdot \vu= \nabla \cdot \vu^*~{\rm and}~ \|\vu^*\|_{\bf H}\le C_0\|\nabla \cdot \vu\|_{\vV},
\end{equation}
 where the generic constant $C_0$ depends on $\Omega$ only. (This can be guaranteed by using   \cite[Theorem 3.1 and (3.2)]{ArnoldScottVogelius1988}.)  Hence, by setting $\vw=\vu-\vu^*$, problem \eqref{Strong form} can be reformulated in (homogeneous) Stokes form as: find $\vw \in {\bf H}\cap \vV$ and $p \in H^1(\Omega)$ such that 
\begin{equation}\label{Stokes form}
-\Delta \vw +\nabla p= {\bf \widetilde G}-\Delta \vu^*,\quad{\rm with}~~\nabla \cdot \vw=0\,.
\end{equation} 
Thanks to the regularity estimates for the Stokes problem in \cite[Theorem 2]{KelloggOsborn1976}, we have 
\[\|\vw\|_{\bf H}+\|\nabla p\|\le C_0\|{\bf \widetilde G}-\Delta \vu^*\|,~~\text{assuming that $\Omega$ is convex.}\]
    If  $\Omega$ is a ${\mathcal C}^{1,1}$ bounded domain in $\R^d$ (with $d\in \{2,3\}$), a similar result was proved in \cite[Theorem~3]{AmroucheGirault1991}. Furthermore, for a ${\mathcal C}^2$ class domain,  we refer the reader to \cite[Theorem IV.6.1]{Galdi2011} for similar regularity results.  Recall that  $\vu=\vw+\vu^*$ and $ p=-\eta\nabla \cdot \vu $, thence  \[\|\vu\|_{\bf H}+\|\nabla(\eta\nabla \cdot \vu)\|\le C_0\Big(\|{\bf \widetilde G}\|+\|\vu^*\|_{\bf H}\Big)\,.\]
  However, from \eqref{u*}, the regularity estimate in \eqref{a priori}, and  the pre-estimate in \eqref{Prestimate},  
  we have 
  \[ \|\vu^*\|_{\bf H} \le C_0\|\nabla \cdot \vu\|_{\vV}\le C_0\|\nabla(\nabla \cdot \vu)\|+ C \Lambda^{-1}\|\vf\|
  \le C_0(\|\eta^{-1}\Delta \vu\| +\|\eta^{-1}{\bf G}\|)+C \Lambda^{-1}\|\vf\|,
  \] and consequently,  
\[\|\vu\|_{\bf H}+\|\nabla(\eta\nabla \cdot \vu)\|
\le C_0(\|\eta^{-1}\Delta \vu\| +\|\eta^{-1}{\bf G}\|)+ C\Lambda^{-1}\|\vf\|+C\|{\bf \widetilde G}\| \,.\]
From the estimate in  \eqref{a priori} together with the embedding of ${\bf L}^2(\Omega)$ into $\vV^*$, we have 
\begin{equation*}
     \|\eta^{-1}{\bf G}\| \le C\Big(1+\|\nabla\widehat \lambda\|_{L^\infty(\Omega)}/\widehat \lambda_{\min}\Big)\|\vf\|~~{\rm and}~~ 
     \|{\bf \widetilde G}\| \le C\Big(1+\widehat \lambda_{\max}/\widehat \lambda_{\min}\Big)\|\vf\|\,.
\end{equation*}
Combining the above two equations, we conclude that
\[  \|\vu\|_{\bf H}+\|\nabla(\eta\nabla \cdot \vu)\|
\le C_0\|\eta^{-1}\Delta \vu\| 
+C\|\vf\|\le C_0\frac{\mu_{\max}}{\widehat \lambda_{\min} \Lambda} \|\Delta \vu\| 
+C\|\vf\|
\le \frac{1}{2}\|\vu\|_{\bf H}
+C\|\vf\|,\]
for a sufficiently large $\Lambda$. Canceling the common terms yields  
\begin{equation}\label{SL}
    \|\vu\|_{\bf H}+2\| \nabla(\eta\nabla \cdot \vu)\|
\le  C\|\vf\|\,.
\end{equation}
To finalize the proof of \eqref{a priori H2}, it is sufficient to show  that  
\begin{equation}\label{last bound}
\|\nabla((\lambda+\mu)\nabla \cdot \vu)\|\le C\|\vf\|\,,
\end{equation}
and then, merging this with \eqref{SL} and the regularity estimate in \eqref{a priori}. In fact, the bound in \eqref{last bound} is a consequence of the inequality
\[\|\nabla((\lambda+\mu)\nabla \cdot \vu)\|=
\|\nabla(\mu\eta\nabla \cdot \vu)\|\le C\|\nabla(\eta\nabla \cdot \vu)\|+C\|\eta\nabla \cdot \vu\|\le C\|\nabla(\eta\nabla \cdot \vu)\|+C\Lambda \|\nabla \cdot \vu\|\,,\]
 and the achieved estimates in \eqref{SL} and \eqref{a priori}.
\end{proof}
\subsection{Regularity in random coefficients}
For the QMC error bounds, we prove in the next theorem  a mixed partial derivatives estimate  of the parametric displacement  $\vu$ with respect to the random variables $y_j$ and $z_k$. 
Here, ${\calS}$ denotes the set of (multi-index) infinite vectors  $\balpha=(\alpha_j)_{j\ge 1}$ with nonnegative integer entries such that  $|\balpha|:=\sum_{j\ge 1} \alpha_j<\infty.$  For  $\balpha=(\alpha_j)_{j\ge 1}$  and $\bbeta=(\beta_j)_{j\ge 1}$ in  $\calS,$ the mixed partial derivative with respect to $\bsy$ and $\bsz$ is denoted by   
\[\partial_{\bsr}^{\balpha,\bbeta}:=\partial_{\bsz}^{\bbeta} \partial_{\bsy}^{\balpha}  =\frac{\partial^{|\bbeta|}}{\partial_{z_1}^{\beta_1}\partial_{z_2}^{\beta_2}\cdots}\frac{\partial^{|\balpha|}}{\partial_{y_1}^{\alpha_1}\partial_{y_2}^{\alpha_2}\cdots}, \quad \bsr=(\bsy,\bsz)\,. 
\]
 
\begin{theorem}\label{lem: vu bound y z}
Under assumptions \eqref{ass A1} and \eqref{ass A5}, for every $\vf \in \vV^*$, every $\bsr \in  U\times U$ and every $\balpha,\bbeta \in \calS$, the solution $\vu(\cdot,\bsr)$ of the parametric weak problem \eqref{para weak} satisfies
\begin{equation}\label{mixed estimate}
\big\|\veps\big(\partial_{\bsy,\bsz}^{\balpha,\bbeta} \vu(\cdot,\bsy,\bsz)\big)\big\|+
\Lambda \big\|\veps\big(\partial_{\bsy,\bsz}^{\balpha,\bbeta} \vu(\cdot,\bsy,\bsz)\big)\big\|\le 
(|\balpha|+|\bbeta|)! \
\widetilde {\bf b}^{\balpha} \widehat {\bf b}^{\bbeta}\Big(\|\veps(\vu)\|+\Lambda \|\nabla\cdot \vu\|\Big) ,
\end{equation} 
where 
\[\widetilde {\bf b}^{\balpha}=\prod_{i\ge 1} (\widetilde b_i)^{\alpha_i},\quad 
\widetilde b_j= \Big(\frac{2C_1}{\widehat \lambda_{\min}}\Big(1 + \frac{\mu_{\max}}{\mu_{\min}}\Big)+\frac{1}{\mu_{\min}}\Big)\|\psi_j\|_{L^\infty(\Omega)},\]
and
\[\widehat {\bf b}^{\bbeta}=\prod_{i\ge 1} (\widehat b_i)^{\beta_i},\quad  \widehat b_j=\Big(\frac{1}{\widehat \lambda_{\min}}\Big(1+\sqrt{2}C_1 \frac{\mu_{\max}}{\mu_{\min}}\Big)+\frac{1}{\sqrt{2}\mu_{\min}}\Big)\|\phi_j\|_{L^\infty(\Omega)}\,.\]
The constant $C_1$ occurring in the proof depends on $\Omega$ only. Consequently,
 \begin{equation}\label{mixed estimate 2}
 \|\partial_{\bsy,\bsz}^{\balpha,\bbeta} \vu(\cdot,\bsy,\bsz)\|_{\vV} \le 
C(|\balpha|+|\bbeta|)! \
\widetilde {\bf b}^{\balpha} \widehat {\bf b}^{\bbeta}\|\vf\|_{\vV^*},
\end{equation}
where the constant $C$ depends on   $\Omega$, $\mu$ and $\widehat\lambda$, but   is independent of $\Lambda$.
\end{theorem}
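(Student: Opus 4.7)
The plan is to proceed by induction on $n:=|\balpha|+|\bbeta|$, exploiting the \emph{affine} dependence of the Lamé parameters on the random variables: $\partial_{y_j}\mu=\psi_j$ and $\partial_{z_j}\lambda=\Lambda\phi_j$, while every second (and higher) partial derivative of $\mu$ or $\lambda$ in a single variable vanishes. Applying $\partial_{\bsr}^{\balpha,\bbeta}$ to the weak identity $\calB(\bsr;\vu,\vv)=\ell(\vv)$ and expanding via Leibniz, only these first-order derivatives of the coefficients survive, so for $(\balpha,\bbeta)\neq(\mathbf 0,\mathbf 0)$ one obtains the recursion
\begin{align*}
\calB(\bsr;\partial_{\bsr}^{\balpha,\bbeta}\vu,\vv) &= -\sum_{j:\,\alpha_j\ge1}2\alpha_j\int_\Omega \psi_j\,\veps\bigl(\partial_{\bsr}^{\balpha-e_j,\bbeta}\vu\bigr):\veps(\vv)\,d\bsx \\
&\qquad -\sum_{j:\,\beta_j\ge1}\beta_j\,\Lambda\int_\Omega \phi_j\bigl(\nabla\cdot\partial_{\bsr}^{\balpha,\bbeta-e_j}\vu\bigr)(\nabla\cdot\vv)\,d\bsx \;=:\; R(\vv),
\end{align*}
where $e_j$ denotes the unit multi-index at position $j$. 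The base case $n=0$ is trivial.

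For the inductive step I mimic the two-pronged a priori argument used in Theorem~\ref{lem: vu bound}, applied to $\vw:=\partial_{\bsr}^{\balpha,\bbeta}\vu$. \emph{Step 1.} Test the recursion with $\vv=\vw$, apply the coercivity \eqref{eq: coer B} and the pointwise inequality $\|\nabla\cdot\vv\|\le\sqrt 2\,\|\veps(\vv)\|$ inside each summand of $|R(\vw)|$, and divide by $\|\veps(\vw)\|$ to obtain
\[
\|\veps(\vw)\| \le \frac{1}{\mu_{\min}}\sum_j \alpha_j\|\psi_j\|_{L^\infty(\Omega)}\,\|\veps(\partial_{\bsr}^{\balpha-e_j,\bbeta}\vu)\| + \frac{\Lambda}{\sqrt 2\,\mu_{\min}}\sum_j \beta_j\|\phi_j\|_{L^\infty(\Omega)}\,\|\nabla\cdot\partial_{\bsr}^{\balpha,\bbeta-e_j}\vu\|.
\]
\emph{Step 2.} Following the $\vu^*$-trick around \eqref{eq:u*}, invoke \cite[Thm~3.1]{ArnoldScottVogelius1988} to produce $\vw^*\in\vV$ with $\nabla\cdot\vw^*=\nabla\cdot\vw$ and $\|\vw^*\|_{\vV}\le C_1\|\nabla\cdot\vw\|$; testing the recursion with $\vv=\vw^*$ and isolating $\int_\Omega\lambda(\nabla\cdot\vw)^2\,d\bsx\ge\Lambda\widehat\lambda_{\min}\|\nabla\cdot\vw\|^2$ gives $\Lambda\widehat\lambda_{\min}\|\nabla\cdot\vw\|^2\le 2\mu_{\max}C_1\|\veps(\vw)\|\,\|\nabla\cdot\vw\|+|R(\vw^*)|$. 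Dividing by $\|\nabla\cdot\vw\|$ and substituting the Step~1 bound for $\|\veps(\vw)\|$ yields a companion bound on $\Lambda\|\nabla\cdot\vw\|$.

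Adding the two inequalities, the coefficient of $\alpha_j\|\psi_j\|_{L^\infty}\,\|\veps(\partial_{\bsr}^{\balpha-e_j,\bbeta}\vu)\|$ collapses to $\tfrac{1}{\mu_{\min}}+\tfrac{2C_1}{\widehat\lambda_{\min}}\bigl(1+\tfrac{\mu_{\max}}{\mu_{\min}}\bigr)=\widetilde b_j/\|\psi_j\|_{L^\infty}$, and the coefficient of $\beta_j\Lambda\|\phi_j\|_{L^\infty}\,\|\nabla\cdot\partial_{\bsr}^{\balpha,\bbeta-e_j}\vu\|$ collapses to $\tfrac{1}{\sqrt 2\mu_{\min}}+\tfrac{1}{\widehat\lambda_{\min}}\bigl(1+\sqrt 2 C_1\tfrac{\mu_{\max}}{\mu_{\min}}\bigr)=\widehat b_j/\|\phi_j\|_{L^\infty}$, matching exactly the constants in the statement (this algebraic matching is what forces the given definitions of $\widetilde b_j,\widehat b_j$). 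Majorising $\|\veps(\partial_{\bsr}^{\balpha-e_j,\bbeta}\vu)\|$ and $\Lambda\|\nabla\cdot\partial_{\bsr}^{\balpha-e_j,\bbeta}\vu\|$ (and their $(\balpha,\bbeta-e_j)$ analogues) by the combined quantity $\|\veps(\cdot)\|+\Lambda\|\nabla\cdot(\cdot)\|$ and invoking the inductive hypothesis reduces the right-hand side to $(n-1)!\,\widetilde{\bf b}^{\balpha}\widehat{\bf b}^{\bbeta}\bigl(\|\veps(\vu)\|+\Lambda\|\nabla\cdot\vu\|\bigr)$ times $\sum_j\alpha_j+\sum_j\beta_j=n$, closing the induction and yielding \eqref{mixed estimate}. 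The bound \eqref{mixed estimate 2} follows by Korn's inequality combined with the a priori estimate \eqref{a priori}.

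The principal technical obstacle is that neither $\|\veps(\vw)\|$ nor $\Lambda\|\nabla\cdot\vw\|$ admits a recursion closed in itself: Step~1 couples $\|\veps(\vw)\|$ to $\Lambda\|\nabla\cdot(\cdot)\|$ at lower order, and the Arnold--Scott--Vogelius-based Step~2 couples $\Lambda\|\nabla\cdot\vw\|$ back to $\|\veps(\vw)\|$. The right invariant to propagate is therefore the \emph{sum} $\|\veps(\vw)\|+\Lambda\|\nabla\cdot\vw\|$; the precise form of $\widetilde b_j$ and $\widehat b_j$ is engineered precisely so that this sum reproduces itself on both sides of the inductive inequality with coefficients independent of $\Lambda$.
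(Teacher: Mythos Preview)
Your proposal is correct and follows essentially the same approach as the paper: derive the recurrence by differentiating the weak formulation, test first with $\vw=\partial_{\bsr}^{\balpha,\bbeta}\vu$ (coercivity plus $\|\nabla\cdot\vv\|\le\sqrt2\,\|\veps(\vv)\|$), then with the Arnold--Scott--Vogelius auxiliary $\vw^*$, add the resulting bounds to obtain a recursion for $\|\veps(\vw)\|+\Lambda\|\nabla\cdot\vw\|$ with the precise constants $\widetilde b_j,\widehat b_j$, and close by induction on $|\balpha|+|\bbeta|$. Your explicit identification of the combined quantity as the correct inductive invariant and your algebraic check of the constants match the paper's argument step for step.
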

\begin{proof} Differentiating both sides of \eqref{para weak} with respect to the variables $y_j$ and $z_k$, we find the following recurrence after some calculations 
\begin{equation}\label{recurrence}
 \calB(\bsr;\partial_{\bsy,\bsz}^{\balpha,\bbeta}\vu, \vv) = -\int_\Omega \Big[2\sum_{\balpha} \alpha_j\psi_j\, \veps(\partial_{\bsy,\bsz}^{\balpha-{\bf e}_j,\bbeta}\vu):\veps(\vv) +\Lambda \sum_{\bbeta}\beta_k\phi_k \nabla \cdot \partial_{\bsy,\bsz}^{\balpha,\bbeta-{\bf e}_k} \vu \nabla \cdot \vv\Big] \,d\bsx,
\end{equation}
where $\sum_{\balpha}=\sum_{j,\alpha_j\ne 0}$ (that is, the sum over the nonzero indices of $\balpha$), and   ${\bf e}_i \in \calS$ denotes the multi-index with entry $1$ in position $i$ and zeros elsewhere. Choose $\vv=\partial_{\bsy,\bsz}^{\balpha,\bbeta}\vu$ in \eqref{recurrence}, then from the coercivity property in \eqref{eq: coer B}  we have   
\begin{multline*}
2\mu_{\min} \|\veps(\partial_{\bsy,\bsz}^{\balpha,\bbeta}\vu)\|^2 \le   \calB(\bsr;\partial_{\bsy,\bsz}^{\balpha,\bbeta}\vu,\partial_{\bsy,\bsz}^{\balpha,\bbeta}\vu)\le 
2\sum_{\balpha}\alpha_j\|\psi_j\|_{L^\infty(\Omega)} \|\veps(\partial_{\bsy,\bsz}^{\balpha-{\bf e}_j,\bbeta} \vu)\|\,
\|\veps(\partial_{\bsy,\bsz}^{\balpha,\bbeta} \vu)\|\\
+ \Lambda \sum_{\bbeta}\beta_k\, \|\phi_k\|_{L^\infty(\Omega)}
\|\nabla \cdot \partial_{\bsy,\bsz}^{\balpha,\bbeta-{\bf e}_k} \vu\|\,\|\nabla \cdot \partial_{\bsy,\bsz}^{\balpha,\bbeta} \vu\|\,,
\end{multline*} 
and thus, using the inequality $\|\nabla \cdot \vv\|\le \sqrt{2}\,\|\veps(\vv)\|$ ($2$ is the physical dimension) gives 
\begin{equation}\label{veps bound} \|\veps(\partial_{\bsy,\bsz}^{\balpha,\bbeta}\vu)\| \le 
\frac{1}{\mu_{\min}}\Big(\sum_{\balpha}\alpha_j\|\psi_j\|_{L^\infty(\Omega)} \|\veps(\partial_{\bsy,\bsz}^{\balpha-{\bf e}_j,\bbeta} \vu)\|
+ \Lambda  \sum_{\bbeta}\beta_k\, \frac{\|\phi_k\|_{L^\infty(\Omega)}}{\sqrt{2}}
\|\nabla \cdot \partial_{\bsy,\bsz}^{\balpha,\bbeta-{\bf e}_k} \vu\|\Big)\,,
\end{equation}
for every $\bsr \in U\times U$. To proceed in our proof, we argue as in \eqref{eq:u*} and deduce that  there exists $(\partial_{\bsy,\bsz}^{\balpha,\bbeta}\vu)^* \in \vV$ such that 
$\nabla \cdot \partial_{\bsy,\bsz}^{\balpha,\bbeta} \vu= \nabla \cdot (\partial_{\bsy,\bsz}^{\balpha,\bbeta}\vu)^*$ and $\|(\partial_{\bsy,\bsz}^{\balpha,\bbeta}\vu)^*\|_{\vV}\le C_1\|\nabla \cdot \partial_{\bsy,\bsz}^{\balpha,\bbeta}\vu\|$. Using this and the inequality $\|\veps((\partial_{\bsy,\bsz}^{\balpha,\bbeta}\vu)^*)\|\le \|(\partial_{\bsy,\bsz}^{\balpha,\bbeta}\vu)^*\|_{\vV}$ in the weak formulation  \eqref{recurrence} with $\vv=(\partial_{\bsy,\bsz}^{\balpha,\bbeta}\vu)^*$, we obtain   
\begin{multline*}\label{eq:alphabetau*}
 \Lambda \widehat \lambda_{\min}  \|\nabla \cdot \partial_{\bsy,\bsz}^{\balpha,\bbeta}\vu\|^2 \le \int_\Omega \lambda \nabla \cdot \partial_{\bsy,\bsz}^{\balpha,\bbeta}\vu \nabla \cdot (\partial_{\bsy,\bsz}^{\balpha,\bbeta}\vu)^* \,\,d\bsx
\le 2\sum_{\balpha}\alpha_j \|\psi_j\|_{L^\infty(\Omega)} \|\veps(\partial_{\bsy,\bsz}^{\balpha-{\bf e}_j,\bbeta} \vu)\|\,
\|\veps((\partial_{\bsy,\bsz}^{\balpha,\bbeta}\vu)^*)\|\\
+ \Lambda  \sum_{\bbeta}\beta_k\, \|\phi_k\|_{L^\infty(\Omega)}
\|\nabla \cdot \partial_{\bsy,\bsz}^{\balpha,\bbeta-{\bf e}_k} \vu\|\,\|\nabla \cdot (\partial_{\bsy,\bsz}^{\balpha,\bbeta}\vu)^*\|
+2\mu_{\max} \|\veps(\partial_{\bsy,\bsz}^{\balpha,\bbeta}\vu)\|\,\|\veps((\partial_{\bsy,\bsz}^{\balpha,\bbeta}\vu)^*)\|\\
\le \Big(2C_1\sum_{\balpha}\alpha_j \|\psi_j\|_{L^\infty(\Omega)} \|\veps(\partial_{\bsy,\bsz}^{\balpha-{\bf e}_j,\bbeta} \vu)\|
+ \Lambda \sum_{\bbeta}\beta_k\, \|\phi_k\|_{L^\infty(\Omega)}
\|\nabla \cdot \partial_{\bsy,\bsz}^{\balpha,\bbeta-{\bf e}_k} \vu\|\\
+2C_1\mu_{\max} \|\veps(\partial_{\bsy,\bsz}^{\balpha,\bbeta}\vu)\|\Big)\,\|\nabla \cdot \partial_{\bsy,\bsz}^{\balpha,\bbeta}\vu\|\,.    
\end{multline*}
Dividing both sides by $\|\nabla \cdot \partial_{\bsy,\bsz}^{\balpha,\bbeta}\vu\|$ and using \eqref{veps bound}  yield 
\begin{multline*}
    \Lambda  \|\nabla \cdot \partial_{\bsy,\bsz}^{\balpha,\bbeta}\vu\|\le \frac{2C_1}{\widehat \lambda_{\min}}\Big(1 + \frac{\mu_{\max}}{\mu_{\min}}\Big) \sum_{\balpha}\alpha_j \|\psi_j\|_{L^\infty(\Omega)} \|\veps(\partial_{\bsy,\bsz}^{\balpha-{\bf e}_j,\bbeta} \vu)\|\\
+ \frac{\Lambda}{\widehat \lambda_{\min}}\Big(1+\sqrt{2}C_1 \frac{\mu_{\max}}{\mu_{\min}}\Big) \sum_{\bbeta}\beta_k\, \|\phi_k\|_{L^\infty(\Omega)}
\|\nabla \cdot \partial_{\bsy,\bsz}^{\balpha,\bbeta-{\bf e}_k} \vu\|\,. 
\end{multline*}      
Combining this estimate with~\eqref{veps bound} leads to 
\begin{equation}\label{estimate 1}
\|\veps(\partial_{\bsy,\bsz}^{\balpha,\bbeta}\vu)\|+ \Lambda \|\nabla \cdot \partial_{\bsy,\bsz}^{\balpha,\bbeta}\vu\|
\le 
\sum_{\balpha}\alpha_j \widetilde b_j \|\veps(\partial_{\bsy,\bsz}^{\balpha-{\bf e}_j,\bbeta} \vu)\|
+ \Lambda  \sum_{\bbeta}\beta_k\,\widehat b_k
\|\nabla \cdot \partial_{\bsy,\bsz}^{\balpha,\bbeta-{\bf e}_k} \vu\|\,. 
\end{equation}
To proceed in our proof, we use the induction hypothesis on $n:=|\balpha+\bbeta|$. From \eqref{estimate 1}, it is clear that \eqref{mixed estimate} holds true when $|\balpha+\bbeta|=1$. Now, assume that \eqref{mixed estimate} is true for $|\balpha+\bbeta|=n$, and the task is to show it for $|\balpha+\bbeta|=n+1.$ 

From \eqref{estimate 1} and the induction hypothesis, we have 
\begin{multline*}
    \|\veps(\partial_{\bsy,\bsz}^{\balpha,\bbeta} \vu)\|+ \Lambda \|\nabla \cdot \partial_{\bsy,\bsz}^{\balpha,\bbeta}\vu\|\le 
 n! \Big(\sum_{\balpha}\alpha_j \widetilde b_j  
\widetilde {\bf b}^{\balpha-{\bf e}_j} \widehat {\bf b}^{\bbeta}\|\veps(\vu)\|+ \Lambda \sum_{\bbeta}\beta_k\, \widehat b_k
\widetilde {\bf b}^{\balpha} \widehat {\bf b}^{\bbeta-{\bf e}_k}\|\nabla \cdot\vu\|\Big)\\
\le  n!\widetilde {\bf b}^{\balpha} \widehat {\bf b}^{\bbeta}\Big(\sum_{\balpha}\alpha_j  + \sum_{\bbeta}\beta_k\Big)(\|\veps(\vu)\|+\Lambda \|\nabla \cdot \vu\|)\,.
\end{multline*}
Since $\sum_{\balpha}\alpha_j  + \sum_{\bbeta}\beta_k=n+1$, the proof of \eqref{mixed estimate} is completed.  

Finally, since $\|\veps(\partial_{\bsy,\bsz}^{\balpha,\bbeta} \vu(\cdot,\bsy,\bsz))\| \ge C\,\|\partial_{\bsy,\bsz}^{\balpha,\bbeta} \vu(\cdot,\bsy,\bsz)\|_{\vV}$ (by Korn's inequality) and since $\|\veps(\vu)\|+\Lambda \|\nabla \cdot \vu\|\le \|\nabla \vu\|+\Lambda \|\nabla \cdot \vu\|\le C\|\vf\|_{\vV^*}$ (by \eqref{a priori}), we obtain \eqref{mixed estimate 2} from \eqref{mixed estimate}. 
 \end{proof}

\section{A truncated problem and error estimates}\label{Sec: Truncation}
In this section, we  investigate the error from truncating the first and second sums in \eqref{KLexpansion} at $s_1$ and $s_2$ terms, respectively, for some $s_1,s_2 \in \N.$  First, let us define the truncated weak formulation problem: for every $\bsr^\vs \in U \times U,$ find $\vu_\vs(\cdot, \bsr^\vs)  \in \vV$,    such that
\begin{equation}\label{truncated weak}
  \calB  (\bsr^\vs;\vu_\vs(\cdot,\bsr^\vs),\vv) = \ell(\vv), \qquad \forall ~\vv \in \vV.
\end{equation}
By  Theorem \ref{thm: unique solution}, the variational  problem \eqref{truncated weak} has a unique solution. In the next theorem, we estimate $\calL(\vu-\vu_\vs)$. This is needed for measuring the  QMC finite element error in \eqref{combine}. 
 \begin{theorem}\label{Truncating error}
Let $\vu$ and $\vu_\vs$ be the solutions of the parametric weak problems
\eqref{para weak} and \eqref{truncated weak}, respectively.  Assume that $|\calL(\vw)|\le \|\calL\|_{\vV^*}\|\vw\|_\vV$ for any $\vw \in \vV$. Under Assumption \eqref{ass A1}, for every $\vf \in \vV^*$, every $\bsr \in U\times U$, and
every $\vs \in \N^2$, we have
\begin{equation}\label{convergence calL u-us}
|\calL(\vu(\cdot, \bsr))-\calL(\vu_\vs(\cdot,\bsr^\vs))| 
\le C\,\widetilde C\,   \|\vf\|_{\vV^*} \|\calL\|_{\vV^*},
\end{equation}
where $C$ depends on $\Omega,$ $\mu$, and $\widehat \lambda$, and 
$\widetilde C= 
 \sum_{j \ge s_1+1} \|\psi_j\|_{L^\infty(\Omega)}
+ \sum_{j \ge s_2+1} \|\phi_j\|_{L^\infty(\Omega)}\,.$
 Moreover, if  $\sum_{j \ge s_1+1} \|\psi_j\|_{L^\infty(\Omega)} \le C_p s_1^{1-1/p}$ and $\sum_{j \ge s_2+1} \|\phi_j\|_{L^\infty(\Omega)} \le C_q s_2^{1-1/q}$ for some $0<p,\,q<1,$ then $\widetilde C \le  C_{p,q} \Big(s_1^{1-\frac{1}{p}}+ s_2^{1-\frac{1}{q}}\Big).$ 
 Note that, by using the Stechkin inequality, a similar bound can be observed  if  
 \eqref{ass A5}~and \eqref{ass A3} are satisfied instead.  
\end{theorem}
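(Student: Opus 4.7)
The plan is to use an Aubin–Nitsche style duality argument tailored so that the $\Lambda$-dependence cancels. First I would introduce the truncated dual problem: by Theorem \ref{thm: unique solution} applied to the truncated bilinear form, there is a unique $\boldsymbol{\zeta}_\vs\in\vV$ solving
\begin{equation*}
\calB(\bsr^\vs;\vw,\boldsymbol{\zeta}_\vs)=\calL(\vw)\qquad\text{for all }\vw\in\vV,
\end{equation*}
and it inherits the a priori bound $\|\veps(\boldsymbol{\zeta}_\vs)\|+\Lambda\|\nabla\cdot\boldsymbol{\zeta}_\vs\|\le C\|\calL\|_{\vV^*}$ from Theorem~\ref{lem: vu bound} (applied with $\calL$ playing the role of the right-hand side, using the embedding of $\vV^*$ into itself and that $\calB$ is symmetric in its two slots).

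Next, using $\vw=\vu-\vu_\vs$ in the dual equation and subtracting the weak formulations \eqref{para weak} and \eqref{truncated weak}, I would rewrite
\begin{align*}
\calL(\vu-\vu_\vs)
&=\calB(\bsr^\vs;\vu,\boldsymbol{\zeta}_\vs)-\calB(\bsr^\vs;\vu_\vs,\boldsymbol{\zeta}_\vs)
=\calB(\bsr^\vs;\vu,\boldsymbol{\zeta}_\vs)-\ell(\boldsymbol{\zeta}_\vs)\\
&=\calB(\bsr^\vs;\vu,\boldsymbol{\zeta}_\vs)-\calB(\bsr;\vu,\boldsymbol{\zeta}_\vs)\\
&=-\int_\Omega 2(\mu-\mu_{s_1})\,\veps(\vu){:}\veps(\boldsymbol{\zeta}_\vs)\,d\bsx
 -\int_\Omega(\lambda-\lambda_{s_2})\,\nabla\cdot\vu\,\nabla\cdot\boldsymbol{\zeta}_\vs\,d\bsx.
\end{align*}
From \eqref{KLexpansion} and $\bsy,\bsz\in U$ we have $\|\mu-\mu_{s_1}\|_{L^\infty(\Omega)}\le\tfrac12\sum_{j>s_1}\|\psi_j\|_{L^\infty(\Omega)}$ and $\|\lambda-\lambda_{s_2}\|_{L^\infty(\Omega)}\le\tfrac{\Lambda}{2}\sum_{j>s_2}\|\phi_j\|_{L^\infty(\Omega)}$.

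Applying Cauchy–Schwarz and the a priori bound \eqref{a priori} for both $\vu$ and $\boldsymbol{\zeta}_\vs$, the shear term contributes $\sum_{j>s_1}\|\psi_j\|_{L^\infty(\Omega)}\,\|\veps(\vu)\|\,\|\veps(\boldsymbol{\zeta}_\vs)\|\le C\sum_{j>s_1}\|\psi_j\|_{L^\infty(\Omega)}\|\vf\|_{\vV^*}\|\calL\|_{\vV^*}$. The key point of the proof is the dilatation term: the prefactor $\Lambda$ from $\lambda-\lambda_{s_2}$ pairs with the product $\|\nabla\cdot\vu\|\,\|\nabla\cdot\boldsymbol{\zeta}_\vs\|\le C\Lambda^{-2}\|\vf\|_{\vV^*}\|\calL\|_{\vV^*}$, producing $\Lambda/\Lambda^2=1/\Lambda\le 1$ (since $\Lambda\ge 1$), so that bound is $\Lambda$-independent. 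Summing the two contributions yields \eqref{convergence calL u-us} with $\widetilde C=\sum_{j>s_1}\|\psi_j\|_{L^\infty(\Omega)}+\sum_{j>s_2}\|\phi_j\|_{L^\infty(\Omega)}$.

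Finally, the polynomial tail rate is obtained either by the stated hypotheses directly or, under \eqref{ass A5}--\eqref{ass A3}, by invoking Stechkin's inequality to a monotone $\ell^p$-summable sequence, which gives $\sum_{j>s_1}\|\psi_j\|_{L^\infty(\Omega)}\le C_p s_1^{1-1/p}$ and the analogous bound for $\phi$. The main obstacle in the whole argument is not an analytic one but a bookkeeping one: one must be careful to pair the $\Lambda$-weighted term in $\lambda-\lambda_{s_2}$ with two $\nabla\cdot$-type norms (each of which contributes a $\Lambda^{-1}$ factor) rather than with $\veps$-type norms; otherwise an uncontrollable factor of $\Lambda$ would survive and destroy the locking-free character of the estimate.
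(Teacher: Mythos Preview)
Your proof is correct, but it proceeds by duality whereas the paper argues directly on the primal side. The paper subtracts the two weak formulations to obtain
\[
\calB(\bsr^\vs;\vu_\vs-\vu,\vv)=\calB(\bsr-\bsr^\vs;\vu,\vv),
\]
bounds the right-hand side by $C\,\widetilde C\,\|\vf\|_{\vV^*}\|\vv\|_\vV$ using only the $\Lambda$-weighted estimate on~$\vu$ from~\eqref{a priori}, and then mimics the proof of~\eqref{a priori} itself to conclude the intermediate energy bound $\|\vu-\vu_\vs\|_\vV\le C\,\widetilde C\,\|\vf\|_{\vV^*}$, from which~\eqref{convergence calL u-us} follows by applying~$\calL$. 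Your Aubin--Nitsche route instead invokes~\eqref{a priori} twice (for $\vu$ and for the dual state $\boldsymbol{\zeta}_\vs$), which makes the $\Lambda$-cancellation in the dilatation term especially transparent---you even pick up an extra harmless factor~$\Lambda^{-1}$---and avoids re-running the $\vu^*$ argument. The trade-off is that the paper's approach yields the stronger byproduct $\|\vu-\vu_\vs\|_\vV\le C\,\widetilde C\,\|\vf\|_{\vV^*}$ (independent of any particular~$\calL$), while your argument gives only the functional estimate stated in the theorem.
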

\begin{proof}
From the variational formulations in \eqref{para weak} and \eqref{truncated weak}, we have  
\begin{equation}\label{weak difference}
    \calB  (\bsr^\vs;\vu_\vs(\cdot,\bsr^\vs)-\vu(\cdot,\bsr),\vv)=\calB  (\bsr-\bsr^\vs;\vu(\cdot,\bsr),\vv)\,.
\end{equation}
By following the steps in \eqref{eq: bounded} and using the achieved estimate in \eqref{a priori}, we have 
\begin{multline*}
|\calB  (\bsr-\bsr^\vs;\vu(\cdot,\bsr),\vv)|
\le C\Big(\sum_{j \ge s_1+1} \|\psi_j\|_{L^\infty(\Omega)}
 \,\|\vu(\cdot,\bsr)\|_{\vV}
+\Lambda \sum_{j \ge s_2+1} \|\phi_j\|_{L^\infty(\Omega)}\, \|\nabla \cdot \vu(\cdot,\bsr)\|\Big)\,\|\vv\|_{\vV}\\
\le C\,\widetilde C\Big(\|\vu(\cdot,\bsr)\|_{\vV}
+\Lambda\, \|\nabla \cdot \vu(\cdot,\bsr)\|\Big)\,\|\vv\|_{\vV}\le  C\,\widetilde C  \| \vf \|_{\vV^*}\|\vv\|_{\vV}\,.
\end{multline*}

Now, by mimicking the proof of  \eqref{a priori} with \eqref{weak difference} instead of the weak formulation in  \eqref{para weak}, and the above estimate instead of $|\ell(\vv)|\le \|\vf\|_{\vV^*}\|\vv\|_{\vV},$ we obtain 
\begin{equation}\label{middle bound}
\|\vu(\cdot,\bsr) - \vu_\vs(\cdot,\bsr^\vs)\|_{\vV} 
 \le C\,\widetilde C  \|\vf\|_{\vV^*}\,.   
\end{equation}
From the linearity and boundedness properties of $\calL$, 
\[  |\calL(\vu(\cdot,\bsr)-\vu_\vs(\cdot,\bsr^\vs))|
 \le C\|\vu(\cdot,\bsr)-\vu_\vs(\cdot,\bsr^\vs)\|_{\vV}\|\calL\|_{\vV^*}\,,\]
 and hence, to complete the proof, we insert \eqref{middle bound} in this equation. 
 \end{proof}
\section{Nonconforming FEM}\label{Sec: FEM}
In this section, we develop and analyze a low-order nonconforming Galerkin FEM  for approximating the solution of the model problem~\eqref{para weak} (and consequently, of problem \eqref{eq:L1}) over the physical domain~$\Omega$, and also investigate the numerical errors. We introduce a family of regular quasi-uniform triangulations  $\mathcal{T}_h$ of the domain $\overline{\Omega}$ and let $h=\max_{K\in \mathcal{T}_h}(h_K)$, where $h_{K}$ denotes the diameter of the element $K$. For convenience, we introduce the following notations: $\calE=\{E_1,E_2,\ldots, E_{q_h}\}$ is the set of all edges   of the interelement boundaries of the elements in 
${\mathcal T}_h$, while $\calF=\{F_1,F_2,\ldots, F_{b_h}\}$ is the set of all remaining edges that are on the boundary of $\Omega.$ The corresponding  midpoints of the edges in $\calE$ and $\calF$ are denoted by $\xi_1,\xi_2,\ldots,\xi_{q_h}$ and $\eta_1,\eta_2,\ldots,\eta_{b_h}$, respectively. The space of linear polynomials is denoted by $P_1.$  The nonconforming finite element  space is then defined by \begin{multline*}
     \vV_h=\{\vw_h \in {\bf L}^2(\Omega):~\vw_h |_K \in [P_1]^2,~\forall~K \in \mathcal{T}_h, \\
~\vw_h {\rm~ is~continuous~at}~\xi_i~{\rm and}~ \vw_h(\eta_j)=0,~{\rm for}~~1\le i\le q_h,~1\le j\le b_h\}.
\end{multline*}


Developing a well-posed nonconforming FEM that maintains the symmetric structure of the variational formulation of problem \eqref{eq:L1} (which is important at both a theoretical and practical level) becomes more technically challenging when the Lam\'e parameter~$\mu$ is
allowed to depend on the spatial variable~$\bsx$, so an innovative touch is needed here. To elaborate, from the definitions of $\vsigma$ and $\veps,$ and by using the identity $ \nabla \cdot (\mu \nabla \vu)^T=   \nabla \mu  \cdot(\nabla \vu)^T+\mu \nabla(\nabla \cdot \vu)$ (which holds true provided that the mixed first derivatives of the entries of $\vu$ are commutative),  we notice that 
\begin{multline*}\nabla \cdot \vsigma(\vu) = \nabla (\lambda \nabla\cdot \vu)+ \nabla \cdot (\mu \nabla \vu) + \nabla \cdot (\mu \nabla \vu)^T\\
  = \nabla (\lambda \nabla\cdot \vu) +\nabla \cdot (\mu \nabla \vu) +  \nabla \mu  \cdot(\nabla \vu)^T+\mu \nabla(\nabla \cdot \vu)\,.\end{multline*} 
Therefore, since $\mu \nabla(\nabla \cdot \vu)=\nabla(\mu\nabla \cdot \vu)-\nabla\mu \nabla \cdot \vu$,   we may reformulate  \eqref{eq:L1} by rewriting the principle part in divergence form as: for every $\bsr \in U\times U$ and every $\bsx \in \Omega,$
 \begin{equation}\label{strong new}
    -\nabla \cdot(\mu \nabla  \vu) -\nabla (( \lambda+\mu)\nabla \cdot \vu)+\nabla \mu\, \nabla\cdot \vu - \nabla \mu  \cdot(\nabla \vu)^T=\vf\,.
\end{equation} 
 When the Lam\'e parameters $\mu$ and $\lambda$ are constants, \eqref{strong new} reduces to the following strong form: $-\mu\Delta  \vu -( \lambda+\mu)\nabla (\nabla \cdot \vu)=\vf$.
 The weak formulation and the nonconforming FEM can then be defined accordingly; see for example  \cite{BrennerSung1992}. For the weak formulation of \eqref{strong new} when  $\mu$ and $\lambda$ vary with~$\bsx$, we take the ${\bf L}^2(\Omega)$ inner product with a test function $\vv\in \vV$, then use the divergence theorem and the  homogeneous Dirichlet boundary condition on $\vv$. The variational form of problem \eqref{strong new}   is then defined as: find $\vu(\cdot, \bsr) \in  \vV$ such that 
\begin{equation}\label{para weak new}
\widehat  \calB(\bsr;\vu, \vv) = \ell(\vv), \quad \text{for all} \quad \vv \in  \vV,
\end{equation}
where
\[ \widehat  \calB(\bsr;\vu, \vv) := \int_\Omega [\mu\, \nabla \vu:\nabla \vv +(\mu+\lambda) \nabla \cdot \vu \nabla \cdot \vv +(\nabla \mu\, \nabla\cdot \vu - \nabla \mu  \cdot(\nabla \vu)^T)\cdot\vv] \,d\bsx\,.\]
Here, the last term $\int_\Omega (\nabla \mu\, \nabla\cdot \vu - \nabla \mu  \cdot(\nabla \vu)^T)\cdot \vv \,d\bsx$ of the integrand equals
\[\int_\Omega \nabla \mu\cdot ( \vk_{u_1} v_2+\vk_{v_1} u_2) \,d\bsx,~~{\rm with}~~\vu=\begin{bmatrix}u_1\\u_2\end{bmatrix},~~\vv=\begin{bmatrix}v_1\\v_2\end{bmatrix},~~~\bsx=(x_1,x_2),~~{\rm and}~~\vk_g=\begin{bmatrix}-g_{x_2}\\g_{x_1}\end{bmatrix},\]
where the sub index $x_i$ denotes the partial derivative with respect to $x_i$ for $i=1,2.$ Then, by applying again the divergence theorem and using the  homogeneous Dirichlet boundary conditions  $u_2=0$ on $\Gamma$, assuming that the mixed first order derivatives of $\mu$  exist and commute,  we reformulate the operator $\widehat  \calB(\bsr;\cdot, \cdot)$ in a symmetric form as: 
\begin{equation}\label{eq: bilinear ref}
 \widehat  \calB(\bsr;\vu, \vv) := \int_\Omega [\mu\, \nabla \vu:\nabla \vv +(\mu+\lambda) \nabla \cdot \vu \nabla \cdot \vv +\nabla \mu\cdot ( \vk_{u_1} v_2+\vk_{v_1} u_2)] \,d\bsx\,.\end{equation}
Alternatively, $\widehat  \calB(\bsr;\vu, \vv)$ can be extracted  from $\calB(\bsr;\vu, \vv)$ by reformulating the latter  as 
\[\calB(\bsr;\vu, \vv) = \int_\Omega [\mu\, \nabla \vu:\nabla \vv +(\mu+\lambda) \nabla \cdot \vu \nabla \cdot \vv -\mu\,\vk_{u_1} \cdot \nabla v_2-\mu\,\vk_{v_1}\cdot \nabla u_2)] \,d\bsx\,,\]
then applying the divergence theorem to the last two terms to obtain $\widehat  \calB(\bsr;\vu, \vv)$, assuming that the mixed derivatives of $u_1$ and of $v_1$ exist and commute.    

 Motivated by \eqref{para weak new},  we define the nonconforming FEM as: find $\vu_h(\cdot, \bsr) \in  \vV_h$ such that \begin{equation}\label{FEM new}
\widehat  \calB_h(\bsr;\vu_h, \vv_h) = \ell(\vv_h), \quad \text{for all $\vv_h \in  \vV_h$ and for every $\bsr \in U\times U,$}
\end{equation}
where the {\em broken}  bilinear form $\widehat\calB_{h}(\bsr;\cdot,\cdot)$ is defined as $\widehat\calB(\bsr;\cdot,\cdot)$
 in \eqref{eq: bilinear ref}, but with $\nabla_h$ and $\nabla_h \cdot$ in place of $\nabla$ and $\nabla\cdot$, respectively. The discrete gradient operator $\nabla_h$ is defined as:  $\nabla_h \vv|_K= \nabla (\vv|_K)$ for any $K\in \mathcal T_h$, and the discrete divergence operator $\nabla_h\cdot$  is defined similarly.

The existence and uniqueness of the  solution $\vu_h(\cdot,\bsr)$ of \eqref{FEM new} is guaranteed if the operator $\widehat\calB_{h}(\bsr;\cdot,\cdot)$ is positive-definite on $\vV_h$. To show this,  we introduce  the following broken norms on the finite dimensional space $\vV_h$: for $\vv \in V_h,$  
\[ \|\vv\|_h^2 =\sum_{K \in \mathcal T_h}\int_K \nabla\vv:\nabla\vv \,d\bsx~~{\rm and}~~
\|\vv\|_{\calB_h}^2 =\sum_{K \in \mathcal T_h}\int_K[\mu\,\nabla\vv:\nabla\vv+(\mu+\lambda) (\nabla \cdot \vv)^2] \,d\bsx\,.
\]
There exists a positive constant $B$ (depending on $\Omega$) such that 
\begin{equation}\label{eq: B bound}
    \|\vv\| \le  B\,\|\vv\|_h \le  B/\sqrt{\mu_{\min}} \|\vv\|_{\calB_h},\quad {\rm for~all}~~\vv \in \vV_h;
\end{equation}
 see \cite{Brenner2003} for the proof of the first inequality. Noting that  $\|\vv\|_h=0$  means that $\vv$ is a piecewise constant vector on each $K \in \mathcal T_h$, and hence, by using the zero boundary conditions together with continuity at midpoints, it follows that $\vv\equiv {\bf 0}.$

Assume that  $\max\{|\mu_{x_1}|,|\mu_{x_2}|\}\le c_0$ for some positive constant $c_0$. By using \eqref{eq: B bound}, we get
\begin{multline*}
    2\sum_{K \in \mathcal T_h}\int_K|\nabla \mu\cdot \vk_{v_1} v_2|\,d\bsx \le 
2c_0\sum_{K \in \mathcal T_h}\int_K(|v_{1_{x_1}}| +|v_{1_{x_2}}|)|v_2|\,d\bsx \\
\le 
c_0\sum_{K \in \mathcal T_h}\int_K(|v_{1_{x_1}}|^2 +|v_{1_{x_2}}|^2+2|v_2|^2)\,d\bsx\\
\le c_0\Big(\|\vv\|_h^2+2\|\vv\|^2\Big)
\le 
c_0(1+2B^2)/\mu_{\min}\,\|\vv\|_{\calB_h}^2\,.
\end{multline*}
Consequently, for $\vv \in \vV_h$,
  \begin{equation}\label{Bh coer} 
(1- c_0(1+2B^2)/\mu_{\min})\,\|\vv\|_{\calB_h}^2\le    \widehat\calB_{h}(\bsr;\vv,\vv) \le 
(1+ c_0(1+2B^2)/\mu_{\min})\,\|\vv\|_{\calB_h}^2\,.\end{equation}
Therefore, $\widehat\calB_{h}(\bsr;\cdot,\cdot)$ is positive-definite over  $\vV_h$ whenever
$c_0<\mu_{\min}/(1+2B^2)$. Noting that $\widehat\calB_h(\bsr;\cdot,\cdot)=\widehat\calB(\bsr;\cdot,\cdot)$ on $\vV$, we can show with the help of the Poincar\'e inequality that $\widehat\calB(\bsr;\cdot,\cdot)$ is also positive-definite on $\vV$ for small $c_0$, and consequently, $\widehat\calB_{h}(\bsr;\cdot,\cdot)$ is positive-definite on $\vV_h+\vV$ for small $c_0$. The associated energy norm is $\|\cdot\|_{\widehat\calB_h}=\Big(\widehat\calB_h(\bsr;\cdot,\cdot)\Big)^{1/2}.$
\subsection{Convergence analysis}
For the error analysis, following \cite{CrouzeixRaviart1973}, we use the projection operator $\Pi_h: \vV \to \vV_h$,  defined by  
\[\int_{E_i} [\Pi_h \vv-\vv]\,ds=0 \iff \Pi_h \vv(\xi_i)=\frac{1}{|E_i|}\int_{E_i} \vv\,ds,\quad{\rm for}~~1\le i\le q_h.\]
Thus, for any $K \in \mathcal T_h$, $\nabla_h \cdot (\Pi_h \vv) =|K|^{-1}\int_K \nabla \cdot \vv\,d\bsx$ on $K$ , see \cite{CrouzeixRaviart1973} for the proof. So, 
\[\nabla \cdot \vv=0 \implies  \nabla_h \cdot (\Pi_h \vv)=0.\] 
By a standard Bramble–Hilbert argument, it is known that  
\begin{equation}
    \label{projection estimate}\|\vv-\Pi_h \vv\|+h\|\nabla_h (\vv-\Pi_h \vv)\| \le C h^2 \|\vv\|_{\bf H}\,.
\end{equation}
 
In the next theorem, we use some ideas from \cite{BrennerSung1992,CrouzeixRaviart1973} to show the convergence  of the nonconforming finite element solution of problem \eqref{eq:L1} over the domain $\Omega$.   
\begin{theorem}\label{Convergence theorem}
Assume that $\Omega$ is convex,  $\vf \in {\bf L}^2(\Omega)$, and $\max\{|\mu_{x_1}|,|\mu_{x_2}|\}\le c_0$ on $\Omega\times U$ for a sufficiently small  $c_0$. For every $\bsr  \in U\times U,$ let $\vu$ and $\vu_h$ be the solutions of problems \eqref{eq:L1} and \eqref{FEM new}, respectively.  Under Assumption \eqref{ass A5}, we have $\|\vu(\cdot, \bsr)-\vu_h(\cdot, \bsr)\|_{\widehat\calB_h}  
\le  C  h \|\vf\|$,  where $C$ depends on $\Omega$,   $\mu$ and  $\widehat \lambda$,  and linearly on   $\|\nabla \mu(\cdot,\bsy)\|_{L^\infty(\Omega)}$ and $\|\nabla \widehat \lambda(\cdot,\bsz)\|_{L^\infty(\Omega)}$, but is independent of the Lam\'e parameter factor $\Lambda$.
\end{theorem}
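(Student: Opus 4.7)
The plan is to use a Strang-type decomposition for the broken bilinear form. Since $\widehat\calB_h(\bsr;\cdot,\cdot)$ is symmetric and, by \eqref{Bh coer}, positive definite on $\vV_h+\vV$ for sufficiently small $c_0$, and since $\widehat\calB_h=\widehat\calB$ on $\vV$ with $\vu$ a strong solution of \eqref{strong new} (thanks to \eqref{a priori H2}), the second Strang lemma gives
\begin{equation*}
\|\vu - \vu_h\|_{\widehat\calB_h} \le C \Big(\inf_{\vw_h \in \vV_h}\|\vu - \vw_h\|_{\widehat\calB_h} + \sup_{\vv_h \in \vV_h\setminus\{{\bf 0}\}}\frac{|\widehat\calB_h(\bsr;\vu,\vv_h) - \ell(\vv_h)|}{\|\vv_h\|_{\widehat\calB_h}}\Big).
\end{equation*}
The task then reduces to bounding an approximation term and a consistency term, each by $Ch\|\vf\|$ uniformly in $\Lambda$.

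For the approximation term I would take $\vw_h=\Pi_h\vu$ and split $\|\vu-\Pi_h\vu\|_{\widehat\calB_h}^2$ into a broken-gradient piece $\sum_K\int_K\mu|\nabla(\vu-\Pi_h\vu)|^2$, a weighted divergence piece $\sum_K\int_K(\mu+\lambda)(\nabla_h\cdot(\vu-\Pi_h\vu))^2$, and a lower-order symmetrization piece involving $\nabla\mu$. The first follows directly from \eqref{projection estimate} combined with $\|\vu\|_{\bf H}\le C\|\vf\|$ given by \eqref{a priori H2}. For the second I exploit the Crouzeix--Raviart identity $\nabla_h\cdot\Pi_h\vu=P_0(\nabla\cdot\vu)$, where $P_0$ is the elementwise $L^2$-projection onto constants, and apply the local Poincar\'e inequality on each $K$. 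The symmetrization contribution, which only involves $\nabla\mu$ and a Lebesgue product of two $L^2$ factors, is absorbed using the embedding \eqref{eq: B bound}.

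The main obstacle is obtaining $\Lambda$-uniformity in the weighted divergence piece, since $\|\mu+\lambda\|_{L^\infty}$ may be of order $\Lambda$. The decisive idea is to avoid estimating $\nabla(\nabla\cdot\vu)$ on its own and instead to work through the grouping $g:=(\mu+\lambda)\nabla\cdot\vu$, whose norm in $\vV$ is bounded by $C\|\vf\|$ through \eqref{a priori H2}. From $(\mu+\lambda)\nabla(\nabla\cdot\vu)=\nabla g - \nabla(\mu+\lambda)\,\nabla\cdot\vu$ together with \eqref{a priori} and \eqref{a priori H2} one gets $\|(\mu+\lambda)\nabla(\nabla\cdot\vu)\|\le C\|\vf\|$, so after inserting the Poincar\'e bound the weighted divergence piece is of order $h^2\|\mu+\lambda\|_{L^\infty}\|(\mu+\lambda)^{-1}\|_{L^\infty}^2\|\vf\|^2 \le Ch^2\|\vf\|^2/\Lambda$, as required. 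The same grouping principle, and the uniform bound \eqref{a priori H2}, are what keep every subsequent constant independent of $\Lambda$.

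For the consistency term I would integrate $\widehat\calB_h(\bsr;\vu,\vv_h)$ by parts on each element using the symmetric form \eqref{eq: bilinear ref} and the strong equation \eqref{strong new}. The outcome is $\ell(\vv_h)$ plus a sum of interelement boundary integrals of the normal traces of $\mu\nabla\vu$, $(\mu+\lambda)\nabla\cdot\vu\,I$, and the symmetrization fluxes $\mu\vk_{u_1}$, $\mu\vk_{v_1}$, all paired against the jumps of $\vv_h$ across edges. The Crouzeix--Raviart property that $\vv_h$ has the same mean value on each interelement edge lets me subtract the edge average of every weight without changing the sum, reducing each interior jump contribution to an $L^2$-oscillation on the edge. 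A standard trace-and-scaling argument then bounds the total consistency error by $Ch\,(\|\vu\|_{\bf H}+\|(\mu+\lambda)\nabla\cdot\vu\|_\vV)\,\|\vv_h\|_h$, and invoking \eqref{a priori H2} once more yields the final $Ch\|\vf\|\,\|\vv_h\|_{\widehat\calB_h}$ bound, with the stated linear dependence of $C$ on $\|\nabla\mu\|_{L^\infty(\Omega)}$ and $\|\nabla\widehat\lambda\|_{L^\infty(\Omega)}$ entering through the regularity constant in \eqref{a priori H2}.
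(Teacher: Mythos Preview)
Your overall architecture---second Strang lemma, then an approximation term and a consistency term each bounded by $Ch\|\vf\|$ via the regularity estimate \eqref{a priori H2}---matches the paper, and your consistency argument (elementwise integration by parts, rewriting the edge contributions as jumps of $\vv_h$, exploiting the vanishing of those jumps at edge midpoints, then a Bramble--Hilbert/trace bound) is essentially the paper's treatment of the three boundary terms $J_1,J_2,J_3$.

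The one genuine difference is in the $\Lambda$-robust bound for the weighted divergence part of the approximation error. The paper does \emph{not} use the local Poincar\'e route you propose; instead it invokes the Arnold--Scott--Vogelius construction \eqref{u*} to produce $\vu^*\in\bH\cap\vV$ with $\nabla\cdot\vu^*=\nabla\cdot\vu$ and $\|\vu^*\|_{\bf H}\le C\|\nabla\cdot\vu\|_{\vV}$, so that $\nabla_h\cdot\Pi_h\vu=\nabla_h\cdot\Pi_h\vu^*$ and the divergence piece is controlled by $\Lambda\|\nabla_h\cdot(\vu^*-\Pi_h\vu^*)\|\le C\Lambda h\|\vu^*\|_{\bf H}\le C\Lambda h\|\nabla\cdot\vu\|_{\vV}$, which is $\le Ch\|\vf\|$ by the implicit consequence of \eqref{a priori H2} that $\Lambda\|\nabla\cdot\vu\|_{\vV}\le C\|\vf\|$. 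Your alternative---using $\nabla_h\cdot\Pi_h\vu=P_0(\nabla\cdot\vu)$, a local Poincar\'e inequality, and the identity $(\mu+\lambda)\nabla(\nabla\cdot\vu)=\nabla g-\nabla(\mu+\lambda)\,\nabla\cdot\vu$ with $g=(\mu+\lambda)\nabla\cdot\vu$---is also correct and arguably more elementary, since it avoids the divergence-inversion step and reads the needed bound directly off \eqref{a priori H2}. The paper's route has the advantage of reusing the same $\vu^*$ machinery already needed for the regularity proof, keeping the tools uniform across the analysis. One small imprecision: only the $\nabla\mu\cdot\vk_{v_1}u_2$ part of the symmetrization term carries a derivative of $\vv_h$ and hence generates a boundary flux (the paper's $J_3$); the $\nabla\mu\cdot\vk_{u_1}v_2$ part does not.
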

\begin{proof} 
The bilinear operator $\widehat\calB_h(\bsr;\cdot,\cdot)$ is symmetric and positive-definite on $\vV_h+\vV$,  and so, by the second Strang lemma \cite{StrangFix1973},  we have  
\begin{equation}\label{Strang}
    \|\vu-\vu_h\|_{\widehat\calB_h} \le \inf_{\vv_h \in \vV_h}\|\vu-\vv_h\|_{\widehat\calB_h}+ 
\sup_{\vv_h \in \vV_h \backslash\{0\}}\frac{|\widehat \calB_h(\bsr;\vu-\vu_h,\vv_h)|}{\|\vv_h\|_{\widehat\calB_h}}\,.
\end{equation}
Using \eqref{Bh coer}, \eqref{u*}, the identity $\nabla_h \cdot \Pi_h \vu^*=\nabla_h \cdot \Pi_h \vu$ (because $\nabla_h \cdot \vu^*=\nabla_h \cdot \vu$),  the projection bound in \eqref{projection estimate}, and the second regularity estimate in  Theorem \ref{lem: vu bound}, we reach 
\begin{equation}\label{Strang1}
\begin{aligned}
    \inf_{\vv_h \in \vV_h}\|\vu-&\vv_h\|_{\widehat\calB_h}^2 \le C\|\vu-\Pi_h \vu\|_{\widehat\calB_h}^2\\
    &\le 
C \|\vu-\Pi_h \vu\|_h+C \Lambda \|\nabla_h\cdot(\vu^*-\Pi_h \vu^*)\|+C \|\vu-\Pi_h \vu\|_h \|\vu-\Pi_h \vu\|\\
&\le 
Ch^2\Big(\|\vu\|_{\bf H}+\Lambda \|\vu^*\|_{\bf H}\Big)^2
\le 
Ch^2\Big(\|\vu\|_{\bf H}+\Lambda \|\nabla \cdot \vu\|_\vV\Big)^2 \le C h^2\|\vf\|^2\,.
\end{aligned}
\end{equation}
To estimate the second term in \eqref{Strang}, we make the splitting
\begin{equation}\label{J123}
    \widehat \calB_h(\bsr;\vu-\vu_h,\vv_h)=\widehat \calB_h(\bsr;\vu,\vv_h)-\iprod{\vf,\vv_h}=J_1 +J_2 -J_3,
\end{equation}
where, after
applying the divergence theorem over each element,  \begin{align*}
    J_1&=\sum_{K \in \mathcal T_h}\int_{\partial K} \mu\, (\nabla \vu\,\vn)\cdot \vv_h\,d\bsx, \quad 
    J_2=\sum_{K \in \mathcal T_h}\int_{\partial K} 
(\mu+\lambda)(\nabla \cdot \vu)\,\vn \cdot \vv_h\,d\bsx,\\
&{\rm and}~~J_3=\sum_{K \in \mathcal T_h}\int_{\partial K} v_{1_h} u_2\,\vk_\mu\cdot \vn \,d\bsx,
\end{align*}
with $\vu=(u_1,u_2)^T$ and $\vv_h=(v_{1_h},v_{2_h})^T$, and with $\vn$ denoting the outward unit normal vector. 

The task now is to estimate $J_1$, $J_2$ and $J_3.$ For $J_1,$   define the jump matrix function   $\lb\vv_h\rb $  across the edge $e \in \calE$ between the two elements $K_1$ and $K_2$, with outward unit normal vectors $\vn_1$ and $\vn_2$, respectively, as $\lb\vv_h\rb=\vv_h|_{K_1}\vn_1^T+\vv_h|_{K_2}\vn_2^T$. If $e \in \calF$ is a boundary edge that belongs to $K_1$, then $\lb\vv_h\rb=\vv_h|_{K_1}\vn_1^T$. Thus, $J_1$ can be expanded  as 
\[J_1=\sum_{e \in \calE\cup \calF}\int_e \mu\, \nabla \vu: \lb\vv_h\rb\,d\bsx\,.\]
Due to the continuity of the function $\vv_h$ at the midpoint of any edge  $e \in \calE$ and since $\vv_h={\bf 0}$ at the midpoint of any edge $e \in  \calF$, $\lb\vv_h\rb={\bf 0}$ on the midpoint of every $e \in \calE\cup \calF$.  Consequently, by using the Bramble-Hilbert Lemma and homogeneity arguments, and with the help of the Trace Theorem 
(see \cite[Section 1.6]{BrennerScott2008} or \cite{Scott1977}),
we get 
\[J_1\le Ch \Big(\sum_{e \in \calE\cup \calF}\|\mu\nabla \vu\|_{H^1(\bar K_e)}\|^2\Big)^{1/2} 
 \Big(\sum_{e \in \calE\cup \calF}|e|\lb\vv_h\rb\|_{{\bf L}^2(e)}^2\Big)^{1/2}.\]
Here, $\bar K_e \in K_e$ where $K_e$ is the set of triangles in ${\mathcal T}_h$ having $e$ as an edge. Hence, using the inequality $|e|\,\|\lb\vv_h\rb\|^2_{{\bf L}^2(e)} \le C h^2\sum_{K \in K_e}  \|\nabla \vv_h\|^2_{{\bf L}^2(K)}$ (because $\lb\vv_h\rb={\bf 0}$ at the midpoint of $e$) yields 
\[|J_1| \le  C h\|\mu\nabla \vu\|_\vV \|\vv_h\|_{\calB_h}\le C h\|\vu\|_{\bf H} \|\vv_h\|_{\calB_h}\,.\]
To estimate $J_2$, we follow the above procedure,  taking into account that the jump scalar function     $\lb\vv_h\rb=\vv_h|_{K_1}\cdot \vn_1+\vv_h|_{K_2}\cdot \vn_2$ across the edge $e \in \calE$, while $\lb\vv_h\rb=\vv_h|_{K_1}\cdot \vn_1$ on the boundary edge $e$ that belongs to $K_1$. We get $|J_2| \le   C h\|(\mu+ \lambda)\nabla\cdot\vu\|_\vV \|\vv_h\|_{\calB_h}.$
Again, a similar procedure can also be followed to estimate $J_3$ and obtain  
\[|J_3| \le   C h\|\nabla \mu\|_{W^{1,\infty}(\Omega)}\|u_2\|_\vV \|v_{1_h}\|_{\calB_h}\le   C h\|\vu\|_\vV \|\vv_h\|_{\calB_h}\,,\]
where  $\lb\vv_h\rb=v_{1_h}|_{K_1} \vn_1+v_{1_h}|_{K_2} \vn_2$ on $e \in \calE\cap \overline K_1\cap \overline K_2$ and  $\lb\vv_h\rb=v_{1_h}|_{K_1} \vn_1$ on  $e \in \calF\cap \overline K_1$.
Now, inserting the above estimates in \eqref{J123} and using  Theorem \ref{lem: vu bound} and \eqref{Bh coer} yield  
\begin{equation}\label{J123 estimate}
    |\widehat \calB_h(\bsr;\vu-\vu_h,\vv_h)|\le Ch\Big(\|\vu\|_{\bf H} +\Lambda\|\nabla \cdot\vu\|_{\vV}\Big) \|\vv_h\|_{\calB_h} \le Ch\|\vf\|\,\|\vv_h\|_{\widehat \calB_h}\,.
\end{equation}
Using this and  \eqref{Strang1} together with \eqref{Strang} will complete the proof of this theorem. 
\end{proof}
 For measuring the  QMC finite element error in \eqref{combine}, we estimate $\calL(\vu-\vu_h)$ in the next theorem. 
\begin{theorem}\label{Convergence theorem QMC}
In addition to the  assumptions of Theorem \ref{Convergence theorem},  we assume that  $\calL:{\bf L}^2(\Omega) \to \R$ is a bounded  linear functional ($|\calL(\vw)|\le \|\calL\|\,\|\vw\|$), then 
\begin{equation*}
|\calL(\vu(\cdot, \bsr))-\calL(\vu_h(\cdot, \bsr))|
\le C h^2 \|\vf\| \|\calL\|,~~~{\rm for~every}~~\bsr=(\bsy,\bsz)  \in U\times U,
\end{equation*}
 where $C$ depends on $\Omega$,  $\mu$, $\widehat \lambda$,  and linearly on    $\|\nabla \mu(\cdot,\bsy)\|_{L^\infty(\Omega)}$ and $\|\nabla \widehat \lambda(\cdot,\bsz)\|_{L^\infty(\Omega)}$, but is independent of the Lam\'e parameter factor $\Lambda$.  Furthermore,  by using  the equality 
\[\|\vv\|= \sup_{\vw\in {\bf L}^2(\Omega),\vw\ne {\bf 0}} \frac{|\calL(\vv)|}{\|\vw\|},\quad{\rm for ~any}~~\vv \in {\bf L}^2(\Omega),~~{\rm with}~~\calL(\vv)=\iprod{\vw,\vv},\]
the above estimate leads to the following  optimal convergence bound:
\[\|\vu(\cdot, \bsr)-\vu_h(\cdot, \bsr)\|\le C\,h^2 \|\vf\|,~~~{\rm for~every}~~\bsr  \in U\times U\,.\]
\end{theorem}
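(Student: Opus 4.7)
My approach would be an Aubin--Nitsche duality argument that leverages the ${\bf H}$-regularity of Theorem~\ref{lem: vu bound} and the energy error bound of Theorem~\ref{Convergence theorem}. Since $\calL$ is a bounded linear functional on~${\bf L}^2(\Omega)$, the Riesz representation theorem supplies some $\vw \in {\bf L}^2(\Omega)$ with $\|\vw\| = \|\calL\|$ and $\calL(\vv) = \iprod{\vw,\vv}$ for every $\vv$. I would then introduce the dual solution $\bphi(\cdot,\bsr) \in \vV$ defined by
\[
\widehat\calB(\bsr; \vv, \bphi) = \iprod{\vw,\vv}, \qquad \text{for all } \vv \in \vV,
\]
which is well-posed because $\widehat\calB$ is symmetric and positive-definite on $\vV$, and apply Theorem~\ref{lem: vu bound} to the PDE satisfied by $\bphi$ (with forcing $\vw$) to obtain $\|\bphi\|_{\bf H} + \|(\lambda+\mu)\nabla\cdot\bphi\|_\vV \le C\|\vw\| = C\|\calL\|$, with $C$ independent of $\Lambda$.

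Next, I would derive an error representation. Since $\vu,\bphi \in \vV$, the identity $\calL(\vu) = \widehat\calB_h(\bsr;\vu,\bphi)$ holds trivially. To handle $\calL(\vu_h) = \iprod{\vw,\vu_h}$, I would apply the divergence theorem element by element inside $\widehat\calB_h(\bsr;\vu_h,\bphi)$: the volume part recovers $\iprod{\vw,\vu_h}$ through the strong form~\eqref{strong new} satisfied by $\bphi$, while the interelement contributions assemble into a dual consistency residual $R_1(\bphi,\vu_h)$ of the form $\sum_e \int_e [\text{fluxes of } \bphi] : \lb\vu_h\rb\,ds$. Inserting $\pm\Pi_h\bphi$ and invoking the primal consistency identity $\widehat\calB_h(\bsr;\vu-\vu_h,\vv_h) = R_2(\vu,\vv_h)$ for $\vv_h \in \vV_h$ (derived exactly as $J_1 + J_2 - J_3$ in the proof of Theorem~\ref{Convergence theorem}), I would arrive at
\[
\calL(\vu - \vu_h) = \widehat\calB_h(\bsr;\vu - \vu_h,\, \bphi - \Pi_h\bphi) + R_2(\vu, \Pi_h\bphi) + R_1(\bphi,\vu_h).
\]

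The first term is immediately bounded by $\|\vu-\vu_h\|_{\widehat\calB_h}\,\|\bphi-\Pi_h\bphi\|_{\widehat\calB_h} \le Ch\|\vf\| \cdot Ch\|\calL\| = Ch^2\|\vf\|\|\calL\|$, using Theorem~\ref{Convergence theorem} and the approximation estimate~\eqref{projection estimate} applied to $\bphi$. The main obstacle is to sharpen $R_1$ and $R_2$ to $O(h^2)$ instead of the $O(h)$ bound that the bare consistency calculation of Theorem~\ref{Convergence theorem} would provide. The key structural observation is that both $\lb\Pi_h\bphi\rb$ and $\lb\vu_h\rb$ have vanishing mean on every edge (by midpoint continuity of members of $\vV_h$ combined with linearity along $e$), so I can subtract the edge-averages of the fluxes of $\vu$ and of $\bphi$, which by a Bramble--Hilbert/Poincar\'e-on-edge argument contribute factors $Ch^{1/2}\|\vu\|_{\bf H}$ and $Ch^{1/2}\|\bphi\|_{\bf H}$, respectively. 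For the jump factors I would use $\|\lb\Pi_h\bphi\rb\|_{L^2(e)} \le Ch^{3/2}\|\bphi\|_{\bf H}$ (via $\lb\Pi_h\bphi\rb = \lb\Pi_h\bphi - \bphi\rb$, trace, and~\eqref{projection estimate}) and $\|\lb\vu_h\rb\|_{L^2(e)} \le Ch^{1/2}\|\nabla_h(\vu-\vu_h)\|_{L^2(K_e)} + Ch^{3/2}\|\vu\|_{{\bf H}(K_e)}$ (via $\lb\vu_h\rb = \lb\vu_h - \vu\rb$, the exact linearity of the jump along $e$, and a trace inequality on each adjacent element). Combined with $\|\vu\|_{\bf H} \le C\|\vf\|$, $\|\bphi\|_{\bf H} \le C\|\calL\|$ from Theorem~\ref{lem: vu bound}, and the energy error bound of Theorem~\ref{Convergence theorem}, these deliver $|R_1|,|R_2| \le Ch^2\|\vf\|\|\calL\|$.

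Finally, the ${\bf L}^2$ estimate is an immediate consequence: for each $\bsr$, choose $\vw := \vu(\cdot,\bsr) - \vu_h(\cdot,\bsr)$ in the Riesz definition of~$\calL$, so that $\|\calL\| = \|\vu-\vu_h\|$ and $\calL(\vu-\vu_h) = \|\vu-\vu_h\|^2$, and divide through.
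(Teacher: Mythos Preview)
Your argument is correct and is a standard Aubin--Nitsche duality for nonconforming schemes, but the decomposition you use is genuinely different from the paper's.  The paper introduces the \emph{discrete} dual solution ${\bf\Theta}_h\in\vV_h$ satisfying $\widehat\calB_h(\bsr;{\bf\Theta}_h,\vv_h)=\calL(\vv_h)$ and writes
\[
\calL(\vu-\vu_h)=\widehat\calB_h(\bsr;\vu_{\calL},\vu)-\widehat\calB_h(\bsr;{\bf\Theta}_h,\vu_h)=G_1+G_2+G_3,
\]
where $G_2=\widehat\calB_h(\bsr;\vu-\vu_h,\Pi_h\vu_{\calL})$ and $G_3=\widehat\calB_h(\bsr;\vu_{\calL}-{\bf\Theta}_h,\Pi_h\vu)$ are the primal and dual consistency residuals tested against the \emph{projections} $\Pi_h\vu_{\calL}$~and $\Pi_h\vu$; both are then $O(h^2)$ directly from $\lb\Pi_h g\rb=\lb\Pi_h g-g\rb$ and the projection estimate~\eqref{projection estimate}.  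You dispense with ${\bf\Theta}_h$ altogether: your primal residual $R_2(\vu,\Pi_h\bphi)$ coincides with the paper's $G_2$, but your dual residual $R_1(\bphi,\vu_h)$ is tested against $\vu_h$ itself rather than $\Pi_h\vu$, which forces the extra step of bounding $\|\lb\vu_h\rb\|_{L^2(e)}$ via the tangential-derivative/trace argument and then chaining through the energy error $\|\nabla_h(\vu-\vu_h)\|\le Ch\|\vf\|$ from Theorem~\ref{Convergence theorem}.  Your route is slightly leaner in that it avoids defining and analysing ${\bf\Theta}_h$; the paper's route is more symmetric, handling both residuals by the identical projection trick and not needing to re-enter the primal energy estimate.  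Both yield the same $\Lambda$-independent $O(h^2)$ bound, and the final $\bfL^2$ estimate is obtained identically in both proofs.
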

\begin{proof} 
We start our proof by replacing $\ell$ with $\calL$ in \eqref{para weak}, and then, by Theorem \ref{thm: unique solution}, the new parametric variational problem: find $\vu_{\calL} \in \vV$ such that   
\begin{equation}\label{weak form uL}
    \calB(\bsr;\vu_{\calL}, \vv) = \calL(\vv)\quad{\rm  for~all}~~  \vv \in \vV,~~{\rm for~every}~~\bsr \in U\times U,
\end{equation}
has a unique solution. Furthermore, using  Theorem \ref{lem: vu bound} (with $\vu_{\calL}$ in place of $\vu$), we deduce that  \begin{equation}\label{regulairty of u_L}
    \|\vu_{\calL}(\cdot,\bsr)\|_{\vV}+\Lambda\|\nabla\cdot \vu_{\calL}(\cdot,\bsr)\| \le C\|\calL\|_{\vV^*}\,.
\end{equation} 
Then by proceeding as in the proof of \eqref{a priori H2} in Theorem \ref{lem: vu bound} (with $\vu_{\calL}$ in place of $\vu$), and using that the linear functional $\calL$ is bounded in ${\bf L}^2(\Omega)$, we conclude that   \begin{equation}\label{regularity of ucalL}
\|\vu_{\calL}(\cdot,\bsr)\|_{\bf H} + \|(\lambda+\mu)\nabla\cdot \vu_{\calL}(\cdot,\bsr)\|_{\vV}\le C\|\calL\|.
\end{equation}
The nonconforming finite element solution of $\vu_{\calL}$ is defined by: Find ${\bf\Theta}_h \in \vV_h$ such that 
\begin{equation}\label{weak form uL discrete}
    \widehat  \calB_h(\bsr; {\bf \Theta}_h, \vv_h) = \calL(\vv_h), \quad \text{for all} \quad \vv_h \in  \vV_h\,.
\end{equation}
By following the proof of Theorem \ref{Convergence theorem} and using  \eqref{regularity of ucalL}, we obtain the following error estimate   
\begin{equation}\label{projection estimate ucalL}
    \|\vu_{\calL}(\cdot,\bsr)-{\bf\Theta}_h(\cdot,\bsr)\|_{\widehat \calB_h}\le Ch\|\vu_{\calL}(\cdot,\bsr)\|_{\bf H}\le Ch\|\calL\|.
\end{equation}
From the linearity property of $\calL$, \eqref{weak form uL}, \eqref{weak form uL discrete}, and the symmetric property of  $\widehat \calB_h(\bsr;\cdot,\cdot)$, 
\begin{equation}\label{G1G2G3}
    \calL(\vu(\cdot,\bsr)-\vu_h(\cdot,\bsr))
=\widehat  \calB_h(\bsr;\vu_{\calL}, \vu)
-\widehat  \calB_h(\bsr;{\bf\Theta}_h, \vu_h)=G_1+G_2+G_3,
\end{equation}
where 
\begin{multline*}
G_1=\widehat  \calB_h(\bsr;\vu_{\calL}-{\bf\Theta}_h, \vu-\Pi_h \vu)
+\widehat\calB_h(\bsr;\vu
- \vu_h,{\bf\Theta}_h-\vu_{\calL})
+\widehat\calB_h(\bsr;\vu
- \vu_h,\vu_{\calL}-\Pi_h \vu_{\calL}),
\end{multline*}
\[
G_2=\widehat  \calB_h(\bsr;\vu
- \vu_h,\Pi_h \vu_{\calL})\quad{\rm and}\quad G_3=\widehat  \calB_h(\bsr;\vu_{\calL}-{\bf\Theta}_h,\Pi_h \vu)\,.\]
By the Cauchy-Schwarz inequality, Theorem  \ref{Convergence theorem},   \eqref{projection estimate ucalL}, the bound in \eqref{Strang1} in addition to a similar bound with $\vu_{\calL}$ in place of $\vu$ where the regularity estimate in \eqref{regularity of ucalL} is needed,  we reach 
\begin{multline*}
|G_1|
\le 
\|\vu_{\calL}-{\bf\Theta}_h\|_{\widehat \calB_h}\| \vu-\Pi_h\vu\|_{\widehat \calB_h}+\|\vu- \vu_h\|_{\widehat \calB_h}(\|{\bf\Theta}_h-\vu_{\calL}\|_{\widehat \calB_h}+\|\vu_{\calL}-\Pi_h\vu_{\calL}\|_{\widehat \calB_h}) \le Ch^2 \|\vf\|\,\|\calL\|. \end{multline*}
To estimate $G_2$, we follow the approach used for bounding \eqref{J123}, that is, the derivation between \eqref{J123} and \eqref{J123 estimate}. To obtain a sharp bound, it is necessary  to use the identity  $\lb\Pi_h \vu_{\calL}\rb=\lb\Pi_h \vu_{\calL}-\vu_{\calL}\rb$ across the skeleton of the finite element mesh, which is due to the continuity of $\vu_{\calL}$ on $\Omega$ and also to the fact that $\vu_{\calL}(\cdot,\bsr)={\bf 0}$ on $\Gamma$. In conclusion,  we get 
\[|G_2|\le Ch\Big(\|\vu\|_{\bf H} +\Lambda\|\nabla \cdot\vu\|_{\vV}\Big) \|\Pi_h\vu_{\calL}-\vu_{\calL}\|_h \le Ch^2\|\vf\|\,\|\calL\|\,.
\]
By  arguing as above,  we obtain a similar  estimate of $G_3$ with  $\vu_{\calL}$ and $\vu$ in place of $\vu$ and $\vu_{\calL}$, respectively, and so,  $|G_3|\le Ch^2\|\vf\|\,\|\calL\|.$  Finally,  inserting the achieved estimates of $G_1$, $G_2$ and $G_3$ in \eqref{G1G2G3} will complete the proof. 
\end{proof}


\section{Numerical experiments}\label{Sec: Numeric}
In this section, we illustrate the theoretical convergence results using four examples. The source code is hosted on GitHub~\cite{gitrepo}.

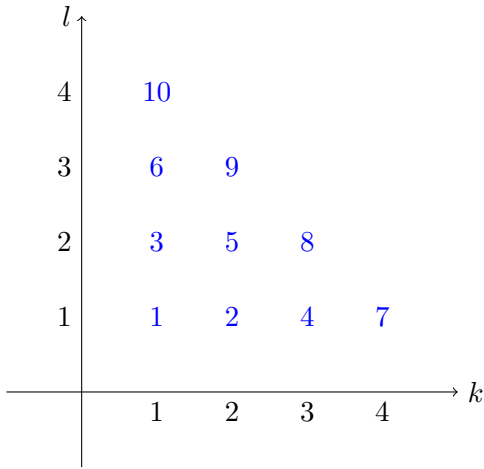
\begin{figure}
\caption{The numbers in blue give the values of $j=\rho(k,l)$ if $n=4$.}
\label{fig: rho}
\begin{center}
\begin{tikzpicture}
\draw[->] (-1,0) -- (5,0);
\node[right] at (5,0) {$k$};
\draw[->] (0,-1) -- (0,5);
\node[left] at (0,5) {$l$};
\foreach \k in {1,2,3,4}
    \node[below] at (\k,0) {$\k$};
\foreach \l in {1,2,3,4}
    \node[left] at (0,\l) {$\l$};
\node[blue] at (1,1) {$1$};
\node[blue] at (2,1) {$2$};
\node[blue] at (1,2) {$3$};
\node[blue] at (3,1) {$4$};
\node[blue] at (2,2) {$5$};
\node[blue] at (1,3) {$6$};
\node[blue] at (4,1) {$7$};
\node[blue] at (3,2) {$8$};
\node[blue] at (2,3) {$9$};
\node[blue] at (1,4) {$10$};
\end{tikzpicture}
\end{center}
\end{figure}

\begin{table}
\caption{Errors and convergence rates for the problem in Example~1  when $\Lambda=1$.}\label{tab: Lambda=1}
\begin{center}
{\tt
\begin{tabular}{|cr|cc|cc|}
\multicolumn{6}{c}{\textrm{Conforming elements}}\\
\hline
$h$&\multicolumn{1}{c|}{\textrm{DoF}}&\multicolumn{2}{c|}{\textrm{$\bfL^2$ error}}&
\multicolumn{2}{c|}{\textrm{$\bfH^1$ error}}\\
\hline
 0.262&    552& 1.22e-01&      & 1.09e+00&      \\
 0.131&   2330& 3.11e-02& 1.976& 5.43e-01& 1.008\\
 0.065&   9570& 7.82e-03& 1.992& 2.71e-01& 1.002\\
 0.033&  38786& 1.96e-03& 1.998& 1.36e-01& 1.001\\
 0.016& 156162& 4.90e-04& 1.999& 6.78e-02& 1.000\\
\hline
\multicolumn{6}{c}{}\\
\multicolumn{6}{c}{\textrm{Nonconforming elements}}\\
\hline
$h$&\multicolumn{1}{c|}{\textrm{DoF}}&\multicolumn{2}{c|}{\textrm{$\bfL^2$ error}}&
\multicolumn{2}{c|}{\textrm{$\bfH^1$ error}}\\
\hline
 0.262&   1778& 4.20e-02&      & 1.07e+00&      \\
 0.131&   7240& 1.06e-02& 1.992& 5.34e-01& 0.997\\
 0.065&  29216& 2.64e-03& 1.997& 2.67e-01& 0.999\\
 0.033& 117376& 6.62e-04& 1.999& 1.34e-01& 1.000\\
 0.016& 470528& 1.65e-04& 2.000& 6.68e-02& 1.000\\
\hline
\end{tabular}
}
\end{center}
\end{table}

\begin{table}
\caption{Errors and convergence rates for the problem in Example~1  when $\Lambda=1000$.}\label{tab: Lambda=1000}
\begin{center}
{\tt
\begin{tabular}{|cr|cc|cc|}
\multicolumn{6}{c}{\textrm{Conforming elements}}\\
\hline
$h$&\multicolumn{1}{c|}{\textrm{DoF}}&\multicolumn{2}{c|}{\textrm{$\bfL^2$ error}}&
\multicolumn{2}{c|}{\textrm{$\bfH^1$ error}}\\
\hline
 0.262&    552& 1.48e+00&      & 3.65e+00&      \\
 0.131&   2330& 5.80e-01& 1.354& 1.62e+00& 1.172\\
 0.065&   9570& 1.81e-01& 1.681& 6.34e-01& 1.355\\
 0.033&  38786& 5.04e-02& 1.842& 2.47e-01& 1.361\\
 0.016& 156162& 1.34e-02& 1.915& 9.86e-02& 1.323\\
\hline
\multicolumn{6}{c}{}\\
\multicolumn{6}{c}{\textrm{Nonconforming elements}}\\
\hline
$h$&\multicolumn{1}{c|}{\textrm{DoF}}&\multicolumn{2}{c|}{\textrm{$\bfL^2$ error}}&
\multicolumn{2}{c|}{\textrm{$\bfH^1$ error}}\\
\hline
 0.262&   1778& 4.01e-02&      & 1.01e+00&      \\
 0.131&   7240& 1.01e-02& 1.986& 5.09e-01& 0.997\\
 0.065&  29216& 2.54e-03& 1.995& 2.54e-01& 0.999\\
 0.033& 117376& 6.35e-04& 1.998& 1.27e-01& 1.000\\
 0.016& 470528& 1.59e-04& 1.999& 6.36e-02& 1.000\\
\hline
\end{tabular}
}
\end{center}
\end{table}

\paragraph{Example 1.} 
We consider a purely deterministic problem with Lam\'e parameters $\mu=\mu_0$~and $\lambda=\lambda_0$, in effect taking the random variables $\bsy=\bsz=0$ in the expansions~\eqref{KLexpansion}. Based on Theorems \ref{Convergence theorem} and \ref{Convergence theorem QMC}, we expect to observe optimal order convergence rates in both ${\bf H}^1(\Omega)$ and  ${\bf L}^2(\Omega)$-norms, uniformly in $\Lambda$, provided that all the imposed assumptions are  satisfied.  We chose $\lambda_0(\boldsymbol{x}) = \Lambda(1+\tfrac12\sin(2x_1))$ and $\mu_0(\boldsymbol{x}) = 1 + x_1+x_2$  for $\bsx=(x_1,x_2)$ in the square $\Omega=(0,\pi)^2$, and chose the body force so that the exact solution is
\[\vu(\boldsymbol{x})=\begin{bmatrix}  u_1(x_1,x_2)\\  u_2(x_1,x_2)\end{bmatrix}
=\begin{bmatrix} \bigl(\cos(2x_1)-1\bigr)\sin(2x_2)\\ \bigl(1-\cos(2x_2)\bigr)\sin(2x_1)
\end{bmatrix} +\frac{\sin(x_1)\sin(x_2)}{\Lambda}\begin{bmatrix}1\\ 1\end{bmatrix}.\] 
Notice $\vu=\mathbf{0}$ on~$\partial\Omega$.  Table~\ref{tab: Lambda=1} compares the errors and convergence rates of our nonconforming scheme with those using the standard conforming, piecewise-linear Galerkin finite element method, when~$\Lambda=1$.  We used a family of unstructured triangulations constructed by uniform refinement of an initial coarse mesh.  As expected, we observe convergence of order $h^2$ in the $\bfL^2$ norm and order~$h$ in the (broken) $\bfH^1$ norm. For a given mesh size~$h$, the nonconforming scheme requires more than three times as many degrees of freedom as the conforming method, but with an $\bfL^2$ error about a third as large.  The errors in the (broken) $\bfH^1$ norm  are essentially the same for both methods.  However, when $\Lambda=1000$ (a nearly incompressible case), as expected,  we see in Table~\ref{tab: Lambda=1000} that the accuracy of the conforming method is much poorer, whereas the errors for the nonconforming scheme are almost unchanged.

\begin{figure}
\caption{Instance of $\lambda$ from Example~2 with $\alpha=2$, $\Lambda=1$
and $s_1=253$.}\label{fig: random coef}
\begin{center}
\includegraphics[scale=0.6]{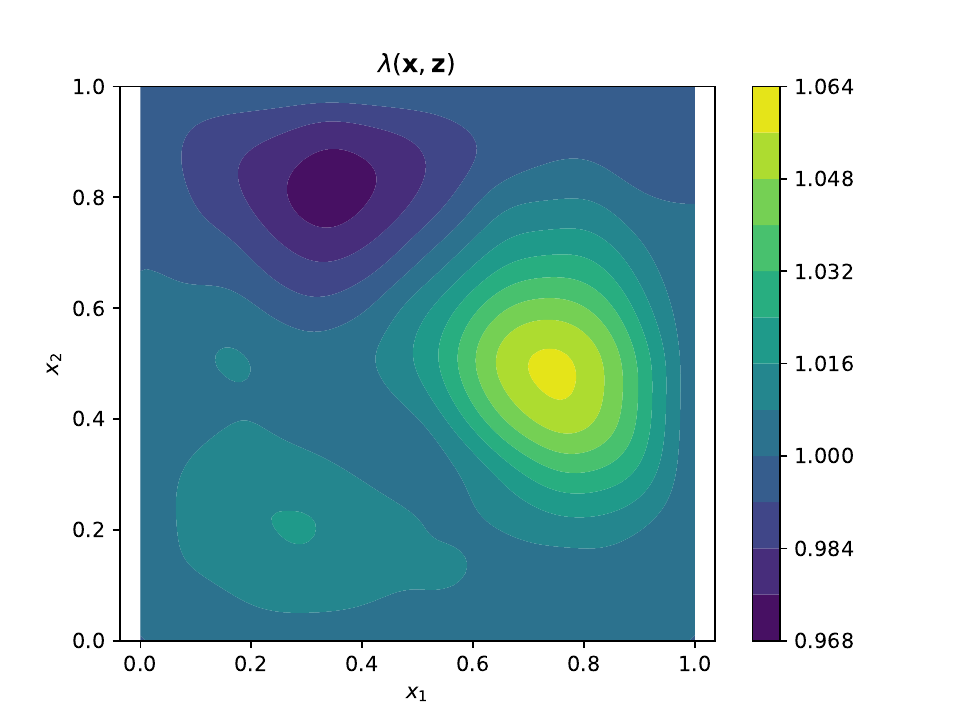}    
\end{center}
\end{figure}

For the remaining examples, we chose the \emph{unit} square~$\Omega=(0,1)^2$ as our spatial domain. To construct random coefficients, let $\alpha>1$ and   define
\[
\psi_{kl}(\bsx)=\phi_{kl}(\bsx)=\frac{\sin(k\pi x_1)\,\sin(l\pi x_2)}{M_\alpha(k+l)^{2\alpha}}
\quad\text{where}\quad
M_\alpha=\lim_{n\to\infty}\sum_{l=1}^n\sum_{k=1}^{n-l}\frac{1}{(k+l)^{2\alpha}}
    =\sum_{j=1}^\infty\frac{j-1}{j^{2\alpha}}.
\]
Notice $M_\alpha=\zeta(2\alpha-1)-\zeta(2\alpha)$ in terms of the Riemann zeta function. By defining $\rho(k,l)$ as shown in Figure~\ref{fig: rho}, we can switch to single-index labelling, putting 
\[
z_j=z_{kl}\quad\text{and}\quad\psi_j=\phi_j=\psi_{kl}\quad\text{for $j=\rho(k,l)$.}
\]
We find that
\[
1+\smash[t]{\sum_{i=1}^{m-2}i\le\rho(k,l)\le\sum_{i=1}^{m-1}i=\frac{m(m-1)}{2}}
    \quad\text{if $k+l=m$,}
\]
and in particular if $j=\rho(k,l)$ then $2j\le(k+l)^2$ so
\[
\|\psi_j\|_{L^\infty(\Omega)}\le\smash[b]{\frac{1}{M_\alpha(k+l)^{2\alpha}}
    \le\frac{(2j)^{-\alpha}}{M_\alpha}}
\]
and
\[
\|\nabla\psi_j\|_{L^\infty(\Omega)}
    \le\smash[t]{\frac{\pi(k^2+l^2)^{1/2}}{M_\alpha(k+l)^{2\alpha}}
    \le\frac{\pi(k+l)^{1-2\alpha}}{M_\alpha}
    \le\frac{\pi(2j)^{-(\alpha-\frac12)}}{M_\alpha}.}
\]
Consider the sum
\[
S_{n,\alpha}(\bsx)=\smash[t]{\sum_{l=1}^n\sum_{k=1}^{n-l}z_{kl}\psi_{kl}(\bsx)
    =\sum_{k=1}^M\sum_{l=1}^Ma_{kl}\sin(k\pi x_1)\sin(l\pi x_2)},
\]
where $a_{kl}=M_\alpha^{-1}(k+l)^{-2\alpha}$ if $1\le k\le n-l$ and $1\le l\le n$, and zero otherwise. Since $|z_{kl}|\le1/2$,  $-1/2\le S_{n,\alpha}(\bsx)\le1/2$ for all $n$~and $\bsx$, so our assumptions (A1)--(A3) hold for $\alpha>3/2$, and (A4) holds for $1/\alpha<p<1$ (and $q=p$).  In all the computations described below,  $\alpha=2$.

\begin{table}
\caption{Errors and convergence rates for $\I_N\calL_h$
in Example~2 for $\Lambda=1$ (left) and $\Lambda=1,000$ (right).
Thus, $\mu=\mu_0$ is deterministic and $\lambda$ is random.}\label{tab: random lambda}
\begin{center}
\renewcommand{\arraystretch}{1.2}
{\tt
\begin{tabular}{|r|ccc|ccc|}
\multicolumn{4}{c}{$\Lambda=1$}&\multicolumn{3}{c}{$\Lambda=1000$}\\
\hline
$N$&$\I_N\calL_h$&Error& Rate&$\I_N\calL_h$&Error& Rate\\
\hline
    16&  -0.4024722078&  1.74e-05&         &  -0.0031572258&  9.85e-07&         \\
    32&  -0.4024592212&  4.38e-06&    1.986&  -0.0031565029&  2.62e-07&    1.912\\
    64&  -0.4024559125&  1.07e-06&    2.028&  -0.0031563107&  6.95e-08&    1.913\\
   128&  -0.4024551004&  2.63e-07&    2.032&  -0.0031562566&  1.54e-08&    2.173\\
\hline
\end{tabular}
}
\end{center}    
\end{table}

By applying a 2-dimensional fast sine transform to the array~$a_{kl}$, we obtained the values of $S_{n,\alpha}$ on an $M\times M$ grid over the unit square~$\Omega$.  Bivariate cubic spline interpolation was then used to compute $S_{n,\alpha}(\bsx)$ at each quadrature point~$\bsx$ required by the finite element solver. We chose $M=256$ for computing $\lambda$~and $\mu$, and $M=512$ for computing $\nabla\mu$ (the latter requiring mixed fast sine and cosine transforms).  Code profiling showed that for each $\bsr=(\bsy, \bsz)$, the CPU time to compute $\vu_h(\cdot,\bsr)$ was split roughly as follows: 40\% for constructing the interpolants, 40\% for assembling the linear system, and 20\% for solving the linear system.  We also observed that naive evaluation of $S_{n,\alpha}$ was about 20 times slower than using fast transforms and interpolation.  

\begin{table}
\caption{Errors and convergence rates for the expected value of 
$\calL_h$ in Example~3 for $\Lambda=1$ (left) and
$\Lambda=1000$ (right). Thus, $\lambda=\lambda_0$ is deterministic and $\mu$ is random.}
\label{tab: random mu}
\begin{center}
\renewcommand{\arraystretch}{1.2}
{\tt
\begin{tabular}{|r|ccc|ccc|}
\multicolumn{4}{c}{$\Lambda=1$}&\multicolumn{3}{c}{$\Lambda=1000$}\\
\hline
$N$&$\I_N\calL_h$&Error& Rate&$\I_N\calL_h$&Error& Rate\\
\hline
    16&  -0.6984920721&  1.73e-04&         &  -0.0036268972&  1.07e-08&         \\
    32&  -0.6983612152&  4.21e-05&    2.038&  -0.0036268891&  2.67e-09&    2.005\\
    64&  -0.6983301483&  1.11e-05&    1.930&  -0.0036268871&  6.60e-10&    2.015\\
   128&  -0.6983216359&  2.54e-06&    2.122&  -0.0036268866&  1.59e-10&    2.057\\
\hline
\end{tabular}
}
\end{center}    
\end{table}

For the linear functional, we chose the average displacement in the $x_2$-direction, that is, $\mathcal{L}(\vu)=\int_\Omega u_2\,d\bsx$, and for the source term, $\vf(\bsx)=[1-x_2^2,2x_1-20]$.  Starting from an unstructured triangular mesh on~$\Omega$, we performed three uniform refinements to obtain a family of four meshes, the finest of which had $h=0.0190$ and $30,848$ free nodes.   For each $\bsr$ and each mesh, we computed the finite element solution~$\vu_h(\cdot,\bsr)$ and evaluated $\calL(\vu_h(\cdot,\bsr))=\calL(\vu(\cdot,\bsr))+O(h^2)$.  Richardson extrapolation then yielded a value~$\mathcal{L}_h(\bsr)=\mathcal{L}(\vu(\cdot,\bsr))+O(h^8)$. The finite element equations were solved via the conjugate gradient method, preconditioned by the stiffness matrix~$\mathbf{P}$ obtained from using constant coefficients $\lambda_0(\bsx)=\Lambda$~and $\mu_0(\bsx)=1$.  In this way, after computing the sparse Cholesky factorization of~$\mathbf{P}$ once for a given mesh, only around ten iterations were needed to compute $\vu_h(\cdot,\bsr)$ for each choice of~$\bsr$.

\paragraph{Example 2.} 
Consider a deterministic coefficient~$\mu(\bsx)=\mu_0(\bsx)=1+x_1+x_2$, together with a random coefficient $\lambda$ defined by taking $\widehat\lambda_0(\bsx)=1$ in~\eqref{KLexpansion}.  In our computations, we worked with the truncated expansion $\lambda_{s_2}(\bsx,\bsz)=\Lambda[1+S_{n,\alpha}(\bsx)]$ for $n=22$, so that $s_2=n(n+1)/2=253$, and with the values $\Lambda=1$~and $\Lambda=1000$. Figure~\ref{fig: random coef} shows the contour plot of $\lambda_{s_2}$ for a particular choice of~$\bsz$. Keeping $s_2$~and $h$ fixed, Table~\ref{tab: random lambda} shows the behaviour of the error in the expected value $\I_N\calL_h$ as we increase the number~$N$ ($N=N_2$)  of the high-order QMC points (generated by a Python package in \cite{Gantner2014}). The error was calculated by comparison to the reference value obtained with $N=512$. Consistent with Theorem \ref{sec: main results}  (or Theorem~\ref{prop:qmc6.3}) for $q>1/\alpha=1/2$, we observe convergence of order~$N^{-1/q}\approx N^{-2}$ for both choices of~$\Lambda$.

\paragraph{Example~3.}  Next, consider a deterministic $\lambda(\bsx)=\lambda_0(\bsx)=\Lambda[1+\tfrac12\sin(2\pi x_1)]$ and a random $\mu$, putting $\mu_0(\bsx)=1$ in \eqref{KLexpansion}.  The finite element and QMC approximations were performed as in Example~2, with $s_1=253$. In Table~\ref{tab: random mu}, we once again observe convergence of order~$N^{-2}$ ($N=N_1$), both for $\Lambda=1$ and $\Lambda=1000$. The error was calculated by comparison to the reference value obtained with $N=512$. Again, consistent with Theorem \ref{sec: main results}  (or Theorem~\ref{prop:qmc6.3}) for $p>1/\alpha=1/2$, we observe a convergence of order~$N^{-1/p}\approx N^{-2}$ for both choices of~$\Lambda$. 

\begin{table}
\caption{Errors and convergence rates for the expected value of 
$\calL_h$ in Example~4 for $\Lambda=1$ (left) and
$\Lambda=1000$ (right). Thus, both $\lambda$ and $\mu$ are random.}
\label{tab: random lambda mu}
\begin{center}
\renewcommand{\arraystretch}{1.2}
{\tt
\begin{tabular}{|r|ccc|ccc|}
\multicolumn{4}{c}{$\Lambda=1$}&\multicolumn{3}{c}{$\Lambda=1000$}\\
\hline
$N$&$\I_N\calL_h$&Error& Rate&$\I_N\calL_h$&Error& Rate\\
\hline
    16&  -0.6942608771&  3.39e-04&         & -0.0031639526&  1.01e-06&         \\
    32&  -0.6941636764&  2.42e-04&    0.488& -0.0031632154&  2.71e-07&    1.896\\
    64&  -0.6939439764&  2.19e-05&    3.465& -0.0031630179&  7.34e-08&    1.884\\
   128&  -0.6939282911&  6.20e-06&    1.820& -0.0031629609&  1.64e-08&    2.161\\
\hline
\end{tabular}
}
\end{center}    
\end{table}

\paragraph{Example~4.} Finally, consider both $\lambda$~and $\mu$ random.  This time, we
chose $n=15$ so that $s_1=s_2=n(n+1)/2=120$, with QMC points in dimension~$s_1+s_2=240$.
The error was calculated by comparison to the reference value obtained with $N=1024$. The results, shown in Table~\ref{tab: random lambda mu}, again exhibit order $N^{-2}$ ($\approx N^{-\min(1/p,1/q)}= N^{-1/p}= N^{-1/q})$ convergence, which confirms the predicted theoretical convergence results in Theorem \ref{sec: main results}  (or Theorem~\ref{prop:qmc6.3}).


\end{document}